\DeclareMathAlphabet{\mathpzc}{OT1}{pzc}{m}{it}
\begin{document}

\newtheorem{theorem}[subsection]{Theorem}
\newtheorem{proposition}[subsection]{Proposition}
\newtheorem{lemma}[subsection]{Lemma}
\newtheorem{corollary}[subsection]{Corollary}
\newtheorem{conjecture}[subsection]{Conjecture}
\newtheorem{prop}[subsection]{Proposition}
\numberwithin{equation}{section}
\newcommand{\mr}{\ensuremath{\mathbb R}}
\newcommand{\mc}{\ensuremath{\mathbb C}}
\newcommand{\dif}{\mathrm{d}}
\newcommand{\intz}{\mathbb{Z}}
\newcommand{\ratq}{\mathbb{Q}}
\newcommand{\natn}{\mathbb{N}}
\newcommand{\comc}{\mathbb{C}}
\newcommand{\rear}{\mathbb{R}}
\newcommand{\prip}{\mathbb{P}}
\newcommand{\uph}{\mathbb{H}}
\newcommand{\fief}{\mathbb{F}}
\newcommand{\majorarc}{\mathfrak{M}}
\newcommand{\minorarc}{\mathfrak{m}}
\newcommand{\sings}{\mathfrak{S}}
\newcommand{\fA}{\ensuremath{\mathfrak A}}
\newcommand{\mn}{\ensuremath{\mathbb N}}
\newcommand{\mq}{\ensuremath{\mathbb Q}}
\newcommand{\half}{\tfrac{1}{2}}
\newcommand{\f}{f\times \chi}
\newcommand{\summ}{\mathop{{\sum}^{\star}}}
\newcommand{\chiq}{\chi \bmod q}
\newcommand{\chidb}{\chi \bmod db}
\newcommand{\chid}{\chi \bmod d}
\newcommand{\sym}{\text{sym}^2}
\newcommand{\hhalf}{\tfrac{1}{2}}
\newcommand{\sumstar}{\sideset{}{^*}\sum}
\newcommand{\sumprime}{\sideset{}{'}\sum}
\newcommand{\sumprimeprime}{\sideset{}{''}\sum}
\newcommand{\sumflat}{\sideset{}{^{\flat}}\sum}
\newcommand{\sumSTAR}{\sideset{}{^{\star}}\sum}
\newcommand{\shortmod}{\ensuremath{\negthickspace \negthickspace \negthickspace \pmod}}
\newcommand{\V}{V\left(\frac{nm}{q^2}\right)}
\newcommand{\sumi}{\mathop{{\sum}^{\dagger}}}
\newcommand{\mz}{\ensuremath{\mathbb Z}}
\newcommand{\leg}[2]{\left(\frac{#1}{#2}\right)}
\newcommand{\muK}{\mu_{\omega}}

\newcommand{\RR}{\mathbb{R}}
\newcommand{\QQ}{\mathbb{Q}}
\newcommand{\CC}{\mathbb{C}}
\newcommand{\NN}{\mathbb{N}}
\newcommand{\ZZ}{\mathbb{Z}}
\newcommand{\FF}{\mathbb{F}}
\newcommand{\C}{{\mathcal{C}}}
\newcommand{\OO}{{\mathcal{O}}}
\newcommand{\cc}{{\mathfrak{c}}}
\newcommand{\norm}{{\mathpzc{N}}}
\newcommand{\trace}{{\mathrm{Tr}}}
\newcommand{\ringO}{{\mathfrak{O}}}
\newcommand{\fa}{{\mathfrak{a}}}
\newcommand{\fb}{{\mathfrak{b}}}
\newcommand{\fc}{{\mathfrak{c}}}
\newcommand{\res}{{\mathrm{res}}}
\newcommand{\fp}{{\mathfrak{p}}}
\newcommand{\fm}{{\mathfrak{m}}}
\newcommand{\aut}{\rm Aut}
\newcommand{\mt}{m(t,u;\ell^k)}
\newcommand{\mtone}{m(t_1,u;\ell^k)}
\newcommand{\mttwo}{m(t_2,u;\ell^k)}
\newcommand{\mbadu}{m(t,u_0;\ell^k)}
\newcommand{\Sts}{S(t_1,t_2;\ell^k)}
\newcommand{\Stt}{S(t;\ell^k)}
\newcommand{\St}{R(t;\ell^k)}
\newcommand{\nN}{n(N,u;\ell^k)}
\newcommand{\Tnn}{T(N;\ell^k)}
\newcommand{\Tn}{T(N;\ell^k)}
\newcommand{\tilnul}{{\tilde{\nu}}_\ell}
\newcommand{\tilnulk}{{\tilde{\nu}}_\ell^{(k)}}
\makeatletter
\def\imod#1{\allowbreak\mkern7mu({\operator@font mod}\,\,#1)}
\makeatother

\title[Mean values of divisors of forms $n^2+Nm^2$]{Mean values of divisors of forms $n^2+Nm^2$}

\date{\today}
\author{Peng Gao and Liangyi Zhao}

\begin{abstract}
  Let $N$ be any fixed positive integer and define
\begin{align*}
   S_N(x)=\sum_{m, n \leq x}d(n^2+Nm^2),
\end{align*}
  where $d(n)$ is the divisor function.  We evaluate asymptotically $S_N(x)$ for several $N$, extending earlier work of Gafurov and Yu on
 the case $N=1$.
\end{abstract}

\maketitle

\noindent {\bf Mathematics Subject Classification (2010)}: 11N37, 11L07  \newline

\noindent {\bf Keywords}: Divisor, large sieve, exponential sums.

\section{Introduction}
\label{sec1}

  Let $d(n)$ denote the divisor function for any positive integer $n$. The study on the mean values of the divisor function has a long history. For example,
using what is now called the hyperbola method, Dirichlet established the following well-known asymptotic formula
\begin{equation} \label{diriprob}
  \sum_{n \leq x}d(n) = x\log x+(2\gamma_0-1)+O(x^{1/2}),
\end{equation}
where $\gamma_0$ is the Euler constant. Ever since Dirichlet's work, there have been
extensive efforts to improve the error term in \eqref{diriprob}. \newline

   We can regard $d(n)$ as evaluating the divisor function at the linear function $f(n)=n$. In this way, one may further consider the mean values of
$d(f(n))$ for an arbitrary polynomial $f(n)$. In \cite{Erdos}, P. Erd\"os established
the correct size of the main term for the mean values of $d(f(n))$ for any irreducible $f(x) \in \mz[x]$ by showing that
\begin{equation} \label{asymdiv}
 \sum_{n \leq x}d(f(n)) \asymp x\log x
\end{equation}
with the implied constants depending on $f$. \newline

For quadratic polynomials, one can obtain asymptotic formulas for the left-hand side of \eqref{asymdiv}.  Such formulas were given by
E. J. Scourfield \cite{Scot} and a sharper result was obtained by C. Hooley in \cite{Hooley}, who showed that when $-c$ is not a perfect square,
\begin{align*}
   \sum_{n \leq x}d(n^2+c) = \lambda_1 x\log x+\lambda_2x+(x^{8/9}(\log x)^3).
\end{align*}
   A more compact representation of $\lambda_1$ is given by J. Mckee in \cite{McKee}. \newline

   In view of the above results on sums of divisor functions over quadratic polynomials, it is natural to extend the study to sums of divisor functions over
quadratic forms. For binary quadratic forms, this is first studied by N. Gafurov, who obtained asymptotic formulas for
\begin{align*}
   S_1(x)=\sum_{m, n \leq x}d(n^2+m^2).
\end{align*}
   A more accurate formula for the above sum was later given by G. Yu in \cite{Yu}. \newline

   In \cite{C&V}, C. Calder\'on and M. J. de Velasco obtained an asymptotic formula for the mean value of divisor functions over certain ternary quadratic form.
The error term was improved by  R.T. Guo and W.G. Zhai in \cite{G&Z} using the circle method. Also using the circle method, L. Q. Hu
obtained an asymptotic formula for the mean value of divisor functions over certain quaternary quadratic form in \cite{Hu}. \newline

For any positive integer $N$ and real number $x$, we define
\begin{align} \label{SN}
  S_N(x)=\sum_{m, n \leq x}d(n^2+Nm^2).
\end{align}
We shall give asymptotic formulas for $S_N(x)$ for certain values of $N$.  The result in this paper is motivated by the above mentioned work of Gafurov and Yu who dealt with the case $N=1$.  Our result is
\begin{theorem}
\label{mainthm}
  For $N=2, 67$ or $163$ and $x \geq 1$, we have
\begin{align*}
  S_N(x)=A_1(N)x^2\log x+A_2(N)x^2+O(x^{3/2+\varepsilon}),
\end{align*}
  where $A_1(N), A_2(N)$ are constants given in \eqref{A1N} and \eqref{A2N}, respectively.
\end{theorem}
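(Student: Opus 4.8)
The plan is to open the divisor function with the Dirichlet hyperbola identity $d(k) = 2\sum_{q \mid k,\, q \le \sqrt{k}} 1 - \mathbf{1}_{k=\square}$ and to write $S_N(x) = 2T_N(x) - U_N(x)$, where
\[
T_N(x) = \sum_{m, n \le x}\ \sum_{\substack{q \mid n^2 + Nm^2 \\ q \le \sqrt{n^2 + Nm^2}}} 1, \qquad U_N(x) = \#\{(m,n) \in [1,x]^2 : n^2 + Nm^2 = \square\}.
\]
The term $U_N(x)$ is genuinely of lower order: writing $n^2 + Nm^2 = \ell^2$ as $(\ell - n)(\ell + n) = Nm^2$ and bounding the number of factorizations by the divisor function gives $U_N(x) = O(x^{1+\varepsilon})$, which is absorbed into the error. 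The substance of the theorem is therefore the evaluation of $T_N(x)$.

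Swapping the order of summation, I would reduce $T_N(x)$ to the lattice-point problem of counting, for each modulus $q \le \sqrt{1+N}\,x$, the pairs $(m,n) \in [1,x]^2$ with $n^2 + Nm^2 \equiv 0 \imod{q}$ lying in the region $n^2 + Nm^2 \ge q^2$. Let $\rho(q)$ denote the number of solutions of $n^2 + Nm^2 \equiv 0 \imod{q}$ in $(\mathbb{Z}/q\mathbb{Z})^2$; a Chinese Remainder Theorem computation makes $\rho$ multiplicative, with $\rho(p) = p + \chi(p)(p-1)$ for $p \nmid 2N$, where $\chi = \left(\frac{-N}{\cdot}\right)$, together with explicit local factors at $p=2$ and $p \mid N$. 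The expected main term of the inner count is $\frac{\rho(q)}{q^2}$ times the area of the admissible region, so the main term of the theorem arises from $2\sum_{q} \frac{\rho(q)}{q^2}\,(\text{area})$.

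For the main term I would study the Dirichlet series $\sum_q \rho(q)\,q^{-s}$, which, up to finitely many Euler factors, has the polar behavior of $\zeta(s-1)L(s-1,\chi)$; this yields $\sum_{q \le z}\rho(q)/q^2 = c_1(N)\log z + c_2(N) + O(z^{-1/2+\varepsilon})$, and combining it with the area weight produces the shape $A_1(N)x^2\log x + A_2(N)x^2$. This is exactly where the restriction to $N = 2, 67, 163$ enters: for these $N$ the field $\mathbb{Q}(\sqrt{-N})$ has class number one and only $w=2$ units, so the class number formula evaluates $L(1,\chi)$ in closed form, and hence the constants $c_1,c_2$ and ultimately $A_1(N), A_2(N)$ in \eqref{A1N} and \eqref{A2N}.

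The hard part is the error term, and this is where the exponential-sum and large-sieve machinery is needed. The trivial lattice-point bound $\sum_q \rho(q)(x/q + 1) \asymp x^2$ is far too weak, so cancellation must be extracted. I would detect the congruence $n^2 + Nm^2 \equiv 0 \imod{q}$ with additive characters, so that the count factors and the error reduces to averages of incomplete quadratic Gauss sums $\sum_{n \le x} e(an^2/q)$ (equivalently, after describing the solution set modulo $q$ as the lines $n \equiv \pm\alpha m$ with $\alpha^2 \equiv -N$, to sawtooth sums in $\alpha m/q$ controlled by Kloosterman-type sums). Completing these sums and then summing over the frequency $a$ and the modulus $q$ by the large sieve inequality should yield the bound $O(x^{3/2+\varepsilon})$. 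Keeping the completed Gauss/Kloosterman sums uniform in $q$, and organizing the large-sieve application so that the logarithmic powers and the dependence on $N$ stay under control, is the principal technical obstacle.
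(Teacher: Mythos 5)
Your global skeleton --- hyperbola identity, reduction to counting solutions of $n^2+Nm^2\equiv 0 \pmod q$ in planar regions, main term from the Dirichlet series of $\rho(q)$, and the square-term bound $U_N(x)=O(x^{1+\varepsilon})$ --- matches the paper's decomposition $S_N(x)=2R_N(x)-Q_N(x)-T_N(x)$ in spirit. But your error-term plan has a genuine gap at exactly the point where the theorem is hard. After detecting the congruence along the lines $n\equiv \pm vm \pmod q$ and applying a Fourier (Vaaler) expansion of $\psi$, one faces sums of the shape $\sum_{d\sim D}\sum_{v^2+N\equiv 0 \pmod d}\sum_{h\le H}h^{-1}\min\left(M,\|hv/d\|^{-1}\right)$ with $D$ as large as $x$. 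If the only rational approximation to $v/d$ you use has denominator $d$ itself, the standard estimate contributes a term of order $d$ per root, and summing over $d\le x$ yields an error of size $x^{2}$ --- no saving at all for moduli near $x$. ``Completing to Gauss sums and applying the large sieve inequality'' does not repair this: the classical large sieve is blind to the special structure of the quadratic roots $v$, and the paper's Lemma \ref{SDHN} is not the classical large sieve but a bespoke bound whose engine is Lemma \ref{approx}: via the class-number-one correspondence between roots $v\equiv\pm r\overline{s} \pmod d$ and representations $4d=r^2+Ns^2$ (Lemma \ref{repncorresp}), every root satisfies $|v/d-a_v/q_v|\le q_v^{-2}$ with a \emph{reduced fraction of denominator $q_v\asymp\sqrt d$}. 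It is this Hooley--Yu input that gives $S(D,H,M;N)\ll (DMH)^{\varepsilon}(D+M)\sqrt D$ and hence $x^{3/2+\varepsilon}$; without something of this strength your outline stalls at the large moduli.

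Relatedly, you misdiagnose why $N$ is restricted to $2,67,163$. Class number one with unit group $\{\pm 1\}$ holds for all of $N=2,7,11,19,43,67,163$, and no closed-form evaluation of $L(1,\chi_N)$ is ever needed --- the paper simply leaves $A_N=L(1,\chi_N)G_N(2)/2$ as it stands. The true constraint is quantitative and sits inside the proof of Lemma \ref{approx}: for $-N\equiv 1\pmod 4$ one has $4d=r_v^2+Ns_v^2$, the AM--GM inequality gives $|r_vs_v|\le 2d/\sqrt N$, and the required inequality $|r_v|/(d|s_v|)\le 1/(4s_v^2)$ forces $4|r_vs_v|\le d$, i.e.\ $\sqrt N>8$, so $N\ge 67$; the cases $N=1,2$ survive separately because there $d=r^2+Ns^2$ without the factor $4$. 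This is why $3,7,11,19,43$ are excluded despite having class number one. Finally, note that the curved boundary of your per-modulus region $\{n^2+Nm^2\ge q^2\}$ produces $\psi$-sums along $m\mapsto\sqrt{\,\cdot\,-Nm^2}$ that linear sawtooth estimates do not handle; the paper needs a separate Kusmin--Landau analysis (its treatment of $T_{N,22}(x)$, including the Ramanujan-sum reduction and the segment decomposition $\Omega_1,\Omega_2$), and your sketch would need the same.
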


   Our proof of Theorem \ref{mainthm} follows the treatment of Theorem 1 in \cite{Yu}. Note that the set of values of $N$ given in the statement of
Theorem \ref{mainthm} forms a
subset of the following set:
\begin{align}
\label{N}
  \{ 1,2,3,7,11,19,43,67,163 \}.
\end{align}
 For each $N$ in the above set,
the corresponding imaginary quadratic number fields $K=\mq(\sqrt{-N})$ has class number $1$. This allows us to establish a bijection
between the roots of $v^2+N \equiv 0 \pmod d$ and the representations of $d$ for $(d, 2N)=1$ in terms of
norms of elements in the ring of integers of $\mq(\sqrt{-N})$ (see Lemma \ref{repncorresp} below). We are then able to establish a large sieve result
to estimate certain exponential sums involving the quadratic roots (see Lemma \ref{approx} below).
This in turn leads to the desired error term in Theorem \ref{mainthm}. The reason we cannot take $N$ from the full set in \eqref{N} in Theorem \ref{mainthm} is a certain restriction on the size of $N$ in the proof of Lemma \ref{approx}.  In fact, as one can see from the proof of Theorem \ref{mainthm}, the assertion of Theorem \ref{mainthm} remains valid for any $N$ in the set \eqref{N} so long as Lemma \ref{approx} can be established for it.

\subsection{Notations} The following notations and conventions are used throughout the paper.\\
\noindent
\noindent $e(z) = \exp (2 \pi i z) = e^{2 \pi i z}$. \newline
$[x]$ stands for the largest integer not exceeding $x$ and let $\{x\}=x-[x]$. \newline
We define $\psi(x)= \{ x \}-1/2$ and $\| x \|=\min \{ \{x \}, 1-\{ x \} \}$. \newline
\noindent
$f =O(g)$ or $f \ll g$ means $|f| \leq cg$ for some unspecified positive constant $c$. \newline
$n \sim N$ means there are positive constants $c_1, c_2$ such that $c_1N < n \leq c_2N$.

\section{Preliminary Lemmas} \label{sec 2}

Our first three lemmas below aim to establish certain large sieve result involving roots of quadratic congruences $v^2+N \equiv 0 \pmod d$. 
We let $K=\mq(\sqrt{-N})$ and $\mathcal{O}_K$  be the ring of integers in $K$. As a preparation, our first lemma
 characterizes these roots in terms of representations of $d$ as norms of elements in $\mathcal{O}_K$  .
\begin{lemma}
\label{repncorresp}
   Let $N$ be a fixed integer given in \eqref{N} and $d$ any positive integer. For $(d, 2N)=1$, there is a one-to-one correspondence between the roots $v$ of
\begin{align}
\label{congeqn}
   v^2+N \equiv 0 \pmod d
\end{align}
 and the representations
\begin{align}
\label{drepn}
  d=\displaystyle \begin{cases}
    \displaystyle  r^2+Ns^2, \quad (r,s)=1, \; r>0 & \text{if $N=1, 2$}, \\ \\
    \displaystyle \frac {r^2+Ns^2}{4}, \quad (r,s) \leq 2, \; r>0, \quad  r\equiv s \pmod 2 & \text{otherwise}, \\
    \end{cases}
\end{align}
  given by
\begin{align}
\label{vsoln}
   v \equiv \pm r\overline{s} \pmod d,
\end{align}
   where $s\overline{s} \equiv 1 \pmod d$. \newline

   When $N \neq 2$ and $2|d$, $(d, N)=1$, then \eqref{congeqn} is not solvable for $N \neq 7$ when $2^3 | d$. If $N=7$ and $2^3|d$,
then there is a one-to-one correspondence between the roots $v$ of \eqref{congeqn} modulo $d$ and
the representations given in \eqref{drepn} for $d$ with $(r,s)=2$ and $2d$ with $(r,s)=1$. The correspondence is given by \eqref{vsoln} modulo $d$,
except that when $(r,s)=2$, we replace $r,s$ by $r/2, s/2$ in \eqref{vsoln}. \newline

   When $N|d$, then \eqref{congeqn} is not solvable if $N^2|d$. If $d=Nd_1$ with $(d_1, N)=1$, then there is a one-to-one correspondence between the roots $v$ of \eqref{congeqn} and the roots $v_1$ of \eqref{congeqn} with $d$ replaced by $d_1$ there. The correspondence is given by $v \equiv N\overline{v}_1 \pmod {d}$.
\end{lemma}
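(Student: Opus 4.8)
The plan is to exploit that for each $N$ in \eqref{N} the field $K=\mq(\sqrt{-N})$ has class number one, so $\mathcal{O}_K$ is a principal ideal domain with explicit integral basis: $\mathcal{O}_K=\mz[\sqrt{-N}]$ when $N=1,2$ and $\mathcal{O}_K=\mz[\tfrac{1+\sqrt{-N}}{2}]$ otherwise, matching the two cases of \eqref{drepn}. The norm form $\norm(r+s\sqrt{-N})=r^2+Ns^2$, resp.\ $\norm(\tfrac{r+s\sqrt{-N}}{2})=\tfrac{r^2+Ns^2}{4}$ with $r\equiv s\pmod 2$, is then exactly the right-hand side of \eqref{drepn}. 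The central device attaching an ideal to a root is the assignment $v\mapsto I_v:=(d,\,v+\sqrt{-N})$ in $\mathcal{O}_K$.

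For $(d,2N)=1$ I would first show $I_v\,\overline{I_v}=(d)$, hence $\norm(I_v)=d$: expanding the product gives the ideal $(d^2,\ d(v+\sqrt{-N}),\ d(v-\sqrt{-N}),\ v^2+N)$, and since $d\mid v^2+N$ and $(d,2N)=1$, one checks these generators collapse to $(d)$. By principality $I_v=(\alpha)$ for some $\alpha\in\mathcal{O}_K$, and $\norm(\alpha)=\norm(I_v)=d$ yields a representation \eqref{drepn}; the primitivity of $I_v$, which is coprime to its conjugate because $(d,2N)=1$, translates into $(r,s)=1$, resp.\ $(r,s)\le 2$. To read off \eqref{vsoln}, note that modulo $I_v$ one has $\sqrt{-N}\equiv -v$, so $0\equiv\alpha\equiv r-sv$ as an element of $I_v\cap\mz=d\mz$, giving $v\equiv r\overline{s}\pmod d$ once $(s,d)=1$ is checked. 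Conversely a representation \eqref{drepn} defines $\alpha$, and the same congruence $r-sv\equiv0$ recovers a root; the conjugate generators $\alpha,\overline{\alpha}$ produce the two roots $\pm v$, while the normalization $r>0$ together with the finitely many units of $\mathcal{O}_K$ pins down a unique representative, making the correspondence a genuine bijection.

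For the remaining statements I would split off the bad primes. When $N\ne2$ and $2\mid d$, the splitting type of $2$ in $K$ is governed by $N\bmod 8$: reducing the minimal polynomial $x^2-x+\tfrac{1+N}{4}$ modulo $2$ shows $2$ is inert exactly when $N\equiv3\pmod 8$ (the cases $N=3,11,19,43,67,163$) and splits when $N\equiv7\pmod 8$ (the case $N=7$). In the inert cases $v^2\equiv -N\equiv5\pmod 8$ is insolvable once $2^3\mid d$, whereas for $N=7$ the extra factor of $2$ in the denominator of the norm form forces the bookkeeping with $d$ at $(r,s)=2$ and $2d$ at $(r,s)=1$ described in the statement. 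For $N\mid d$ the prime $N$, which is prime for every $N\ge2$ in \eqref{N}, ramifies as $(N)=\mathfrak{p}^2$; hence $v^2\equiv-N\pmod{N^2}$ forces $N\mid v$, whence $v^2\equiv0\pmod{N^2}$ and so $N\equiv0\pmod{N^2}$, which is absurd, so \eqref{congeqn} is insolvable when $N^2\mid d$. Writing $d=Nd_1$ with $(d_1,N)=1$, the substitution $v\equiv N\overline{v}_1\pmod d$ is verified directly: it is $\equiv0\pmod N$ automatically, and multiplying $N\overline{v}_1^{\,2}+1\equiv0\pmod{d_1}$ by $v_1^2$ returns $v_1^2+N\equiv0\pmod{d_1}$, giving the claimed bijection with roots modulo $d_1$. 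The main obstacle I anticipate is not any single congruence but the precise matching of multiplicities: ensuring that the unit group and the sign conventions, namely $r>0$ and the $\pm$ in \eqref{vsoln}, collapse the ideal-theoretic correspondence into an exact one-to-one map, and in particular handling the interaction of the prime $2$ with the half-integral norm form in the $N=7$ case without over- or under-counting.
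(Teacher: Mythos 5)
Your main-case argument is correct, but it takes a genuinely different route from the paper. The paper works prime by prime: the Chinese Remainder Theorem and Hensel's lemma show that \eqref{congeqn} has exactly $2^n$ roots for odd $d$ with $n$ distinct prime factors; class number one produces the normalized pairs $(r,s)$ as generators of the products $\prod_i \varpi_i^{\alpha_i}$ with $\varpi_i=\mathfrak{p}_i$ or $\overline{\mathfrak{p}}_i$; and the bijection is then completed by a separate \emph{injectivity} computation: if $r\overline{s}\equiv r'\overline{s'} \pmod d$ then $2d \mid rs'-r's$ while $|rs'-r's|\le 4d/\sqrt{N}<2d$, forcing equality --- this is exactly where the paper needs $N>4$ and defers $N=1,3$ (and essentially $N=2$) to the literature. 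Your device $v\mapsto I_v=(d,\,v+\sqrt{-N})$ makes the bijection structural rather than numerical: your verification of $I_v\overline{I_v}=(d)$ is sound (the quotient ideal contains $d$ and $2v$, which are coprime since $(d,2N)=1$ and $(v,d)=1$), so roots correspond to primitive ideals of norm $d$, and principality plus the unit normalization converts these to pairs $(r,s)$, with conjugation $\alpha\mapsto\overline{\alpha}$ realizing the $\pm$ in \eqref{vsoln}. This buys uniformity --- no size restriction on $N$ and no separate distinctness argument --- and is essentially the Fouvry--Iwaniec parametrization that the paper itself cites for $N=1$. Your treatments of the insolvability for $N\equiv 3\pmod 8$ with $2^3\mid d$, of the insolvability for $N^2\mid d$, and of the reduction $v\equiv N\overline{v}_1 \pmod{d}$ for $d=Nd_1$ all check out and agree with the paper.

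The one genuine gap is the clause you yourself flag: $N=7$ with $2^3\mid d$. This is not mere bookkeeping; it is precisely where your central device breaks. For $k\ge 3$ there are \emph{four} roots of $v^2+7\equiv 0\pmod{2^k}$, but only two primitive ideals of norm $2^k$ (namely $\mathfrak{p}^k$ and $\overline{\mathfrak{p}}^k$, where $(2)=\mathfrak{p}\overline{\mathfrak{p}}$), so the root-to-primitive-ideal correspondence cannot be a bijection there; correspondingly, the coprimality $I_v+\overline{I_v}=(1)$ that drove $I_v\overline{I_v}=(d)$ fails once $2\mid d$ (e.g.\ $d=8$, $v=3$). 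The missing idea, supplied in the paper's proof, is to admit the non-primitive factors $2\mathfrak{p}^{k-2}$ and $2\overline{\mathfrak{p}}^{k-2}$ at the prime $2$, and to show by the norm-and-parity argument there that generators of $\varpi\prod_i\varpi_i^{\alpha_i}$ satisfy $(r,s)=2$ exactly when $\varpi=2\mathfrak{p}^{k-2}$ or $2\overline{\mathfrak{p}}^{k-2}$ and $(r,s)=1$ when $\varpi=\mathfrak{p}^k$ or $\overline{\mathfrak{p}}^k$, the latter giving representations of $2d$ rather than of $d$; matching the resulting $2^{n+2}$ representations against the $2^{n+2}$ roots, together with a distinctness check, closes the case. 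Without an analogue of this enumeration your proposal proves the lemma only for $(d,2N)=1$, for the insolvability statements, and for $N\mid d$ --- which, to be fair, is all that the rest of the paper uses, since Lemma \ref{approx} only invokes $N\in\{1,2,67,163\}$, for which the $N=7$ clause is vacuous.
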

\begin{proof}
   We note first that the cases with $N=1, 3$ of the lemma are established in \cite{Fo&I} and \cite{Pandey}, respectively. Moreover, the treatment of $N=2$ is similar but easier compared to the remaining cases. We may thus assume that $N \neq 1, 2, 3$ in what follows.  The remaining $N$'s in \eqref{N} all satisfy $-N \equiv 1 \pmod 4$ so that any algebraic integer in $\mathcal{O}_K$ has the form
\begin{align*}
   \frac {r+s\sqrt{-N}}{2}, \quad r, s \in \mz, \quad r \equiv s \pmod 2.
\end{align*}

    When $d$ is odd, we write $d=\prod^n_{i=1} p^{\alpha_i}_i$, with $p_i$ being distinct primes and $\alpha_i$ being positive integers. By the Chinese Remainder Theorem, \eqref{congeqn} is solvable if and only if the equations
\begin{align} \label{congeqnmodpp}
   v^2+N \equiv 0 \pmod {p^{\alpha_i}_i}
\end{align}
   are solvable for all $i$ and the number of solutions of \eqref{congeqn} equals the product of the number of solutions of the above equations. Further, by Hensel's lemma \cite[Theorem 3.19]{Na}, \eqref{congeqnmodpp} are solvable if and only if
\begin{align}
\label{congeqnmodp}
   v^2+N \equiv 0 \pmod {p_i}
\end{align}
   are solvable for all $i$.  If solvable, the number of solutions for each equation given in \eqref{congeqnmodp} is two so that
each equation in \eqref{congeqnmodpp} admits two solutions as well, again by Hensel's lemma. \newline

Now for each $i$, equation \eqref{congeqnmodp} is solvable if and only if
\begin{align}
\label{quad}
  \leg {N}{p_i}=1,
\end{align}
   where $\leg {\cdot}{p_i}$ is the Jacobi symbol modulo $p_i$.  It is well-known (see, for example \cite[Section 3.8]{iwakow}) that \eqref{quad} holds if and only if
$(p_i)$ splits in $K$, i.e.
\begin{align}
\label{pdecomp}
     (p_i)=\mathfrak{p}_i\overline{\mathfrak{p}}_i, \quad  \mathfrak{p}_i \neq \overline{\mathfrak{p}}_i,
\end{align}
   where $\mathfrak{p}_i, \overline{\mathfrak{p}}_i$ are prime ideals of $K$. Note that the above relation implies that $\mathcal{N}(\mathfrak{p}_i)=p_i$,
where $\mathcal{N}$ denotes the norm for $K$ over $\ratq$. As $K$ is of class number one, $\mathfrak{p}_i$ is a principal ideal. Choosing
a generator $(r_i+s_i\sqrt{-N})/2$ with $r_i>0$ (note that $r_i \neq 0$ here for otherwise this implies that $N | \mathcal{N}(\mathfrak{p}_i)=p_i$, contradicting the assumption that $(d, N)=1$) for $\mathfrak{p}_i$ implies that $p_i=(r^2_i+Ns^2_i)/4$. Moreover, we have $(r_i, s_i) \leq 2$ since if we have $2^k, k \geq 2$ orany rational prime $q \neq 2$ dividing $(r_i, s_i)$, then taking the norms implies that $2^{2k-2} | p_i$ or $q^2|p_i$, which is not possible. \newline

   Conversely, if one can writes $p_i=(r^2_i+Ns^2_i)/4$ with $(r_i, s_i) \leq 2, r_i \equiv s_i \pmod 2$, then \eqref{congeqn} is solvable
via the solutions given by \eqref{vsoln}. Note that we have
$$(p_i)=\mathfrak{p}_i\overline{\mathfrak{p}}_i=\left(\frac {r_i+s_i\sqrt{-N}}{2} \right )\left(\frac {r_i-s_i\sqrt{-N}}{2}\right )$$
 in $K$. This implies that
$(\frac {r_i+s_i\sqrt{-N}}{2})=\mathfrak{p}_i$ or $\overline{\mathfrak{p}_i}$ by unique factorization in $K$. It follows that
the pair $(r_i, s_i)$ is uniquely determined up to units (note that in our case the only units in $K$ are $\pm 1$).
We then deduce that in this case the correspondence \eqref{vsoln} is indeed one-to-one. Thus, the assertion of the lemma is valid for $d$ being a prime. \newline

   When $d$ is a prime power, say $d=p^{\alpha_i}_i$,
then we see that when \eqref{congeqnmodpp} is solvable, \eqref{congeqnmodp} is also solvable so that \eqref{pdecomp} is valid
 and similar to our discussions above,
a generator $(u_i+v_i\sqrt{-N})/2$ for $\mathfrak{p}^{\alpha_i}_i$ satisfies $(u_i, v_i) \leq 2, u_i \equiv v_i \pmod 2$
and that $(u^2_i+Nv^2_i)/4=p^{\alpha_i}_i$. Conversely, if we have
$(u^2_i+Nv^2_i)/4=p^{\alpha_i}_i$ for some $(u_i, v_i) \leq 2, u_i \equiv v_i \pmod 2$. Then \eqref{congeqn} is solvable and we have
$$(\mathfrak{p}_i\overline{\mathfrak{p}}_i)^{\alpha_i}=\left(\frac {u_i+v_i\sqrt{-N}}{2} \right)\left(\frac {u_i-v_i\sqrt{-N}}{2} \right),$$ which
implies that we must have $\left( \frac {u_i+v_i\sqrt{-N}}{2} \right)=\mathfrak{p}_i^{\alpha_i}$ or $\overline{\mathfrak{p}}_i^{\alpha_i}$.
This in turn implies that the pair $(u_i, v_i)$ is uniquely determined up to units so that
in this case the correspondence \eqref{vsoln} is also one-to-one. Thus, the assertion of the lemma is valid for $d$ being a prime power. \newline

   Now, to prove the assertion of the lemma for a general $d$, we first show that when \eqref{drepn} is valid, then
the solutions given in \eqref{vsoln} are all different modulo $d$. Suppose now we have
\begin{align}
\label{didentity}
   d=\frac {r^2+Ns^2}{4}=\frac {(r')^2+N(s')^2}{4}, \quad (r, s) \leq 2, \; (r', s') \leq 2, \; r\equiv s (\bmod 2), \; r' \equiv s' (\bmod 2), r, \; r'>0
\end{align}
   and that
\begin{align}
\label{congcond}
   r\overline{s} \equiv r'\overline{s'} \pmod d.
\end{align}
   This implies that
\begin{align*}
   rs' \equiv r's \pmod d.
\end{align*}

   As the above congruence relation also holds with $d$ being replaced by $2$ and we have $(d,2)=1$,
    we deduce that $2d| (rs'-r's)$. We further note (as arithmetic means always exceeds geometric means) that
\begin{align*}
   |rs'-r's| \leq |rs'|+|r's| \leq \frac {r^2+N(s')^2}{2\sqrt{N}}+\frac {(r')^2+Ns^2}{2\sqrt{N}}=\frac {4d}{\sqrt{N}} < 2d
\end{align*}
   as $N>4$. We then conclude that we must have $rs'=r's$. Thus we have $s | rs'$ and $s' | r's$. As $(r,s)=(r',s')=1$, we deduce that $s | s'$ and $s'|s$.  Hence $s=s'$ (we can not have $s =-s'$ as this would imply that $r=-r'$ but both $r$ and $r'$ are positive)
and then $r=r'$ as they are both positive. If $(r,s)=1$ and $(r',s')=2$, then on replacing $r', s'$ by $r'/2, s'/2$ and arguing as above,
we deduce that $r'=2r, s'=2s$, contradicting \eqref{didentity}. We can discuss other cases similarly to conclude that
in order for the congruence condition \eqref{congcond} to hold, we must have $r=r', s=s'$. Hence the solutions given in \eqref{vsoln} are all distinct. \newline

Now we return to the case of a general $d$. When \eqref{congeqn} is solvable, then \eqref{congeqnmodpp} are solvable for all $p_i$ so that we have \eqref{pdecomp} and
similar to our discussions above, one checks that all the
pairs $(r,s)$ with $r>0, (r,s) \leq 2$ satisfying \eqref{drepn} are coming from identifying $(r+s\sqrt{-N})/2$ with a generator of
$\prod^n_{i=1}\varpi^{\alpha_i}_i$ with $\varpi_i=\mathfrak{p}_i$ or $\overline{\mathfrak{p}}_i$. We can now fix a generator $(r_i+s_i\sqrt{-N})/2$ with
$r_i>0$ for each $\mathfrak{p}_i^{\alpha_i}$.
By unique factorization, we must have $(r+s\sqrt{-N})/2=\prod^n_{i=1}d_i$ with $d_i=(r_i+s_i\sqrt{-N})/2$ or $(r_i-s_i\sqrt{-N})/2$. There are $2^n$ ways of forming such a product and as conjugated pairs determine the value of $s$ up to a sign, the total number of ways to obtain different pairs of $(r,s)$ up to the sign of $s$ are $2^n/2$. As we have
shown above, different pairs give different pairs of solutions to \eqref{congeqn} via \eqref{vsoln}.  Thus, we obtain $2^n$ different solutions to \eqref{congeqn} via this correspondence. On the other hand, by the Chinese Remainder Theorem, we know \eqref{congeqn} has exactly $2^n$ solutions. Thus, the solutions given in \eqref{vsoln} are in one-to-one correspondence to the solutions of \eqref{congeqn} and this completes the proof for the general case. \newline

   Next, we examine the case when $N \neq 2$ and $2|d$. Note first that any $N \neq 7$ given in \eqref{N} is $\equiv 3 \pmod 8$
so that $v^2+N \equiv 0 \pmod 8$ has no solutions. Thus equation \eqref{congeqn} is not solvable when $2^3|d$.   When $N=7$,
equation \eqref{congeqn} is always solvable when $d=2^k$. In fact, one checks that it has one solution when $k=1$,
two solutions when $k=2$ and four solutions when $k \geq 3$ using arguments similar to those in the proof \cite[Theorem 3.19]{Na} of Hensel's lemma. As
any representation of $d$ implies that \eqref{congeqn} is solvable via \eqref{vsoln}, we may now assume that \eqref{congeqn} is solvable modulo
$d=2^k\prod^n_{i=1} p^{\alpha_i}_i$
with $k \geq 3$, $p_i$ being distinct odd primes and $\alpha_i$ being positive integers. In this case we still have \eqref{pdecomp} and we further note that we have
\begin{align*}
  (2)=\mathfrak{p}\overline{\mathfrak{p}}, \quad \mathfrak{p}= \left( \frac {1+\sqrt{-7}}{2} \right), \quad \mathfrak{p} \neq \overline{\mathfrak{p}}.
\end{align*}
   Now, all representation of $d$ satisfying \eqref{drepn} are coming from identifying $(r+s\sqrt{-7})/2$ as a generator of an ideal such that
\begin{align}
\label{mod2gen}
   \left ( \frac {r+s\sqrt{-7}}{2} \right )=\varpi\prod^n_{i=1}\varpi^{\alpha_i}_i
\end{align}
  with $\varpi_i=\mathfrak{p}_i$ or $\overline{\mathfrak{p}}_i$,
$\varpi=\mathfrak{p}^k, \overline{\mathfrak{p}}^k, 2\mathfrak{p}^{k-2}$ or $2\overline{\mathfrak{p}}^{k-2}$.
In fact, it is easy to see that no other product will produce a representation in \eqref{drepn} satisfying $(r,s) \leq 2$.
On the other hand, let $r,s$ be given in \eqref{mod2gen}, if $2^2 |(r,s)$, then by setting $r=4r_1, s=4s_1$ and taking norms on both sides of \eqref{mod2gen},
we see that $2^{k-2} |r^2_1+7s^2_1$. As $k \geq 3$, this implies that $r_1 \equiv s_1 \pmod 2$. Thus,
$(r_1+s_1\sqrt{-7})=2((r_1+s_1\sqrt{-7})/2)$ with $(r_1+s_1\sqrt{-7})/2 \in \mathcal{O}_K$,
contradicting \eqref{mod2gen}. We conclude that every $r,s$ given by \eqref{mod2gen} satisfies $(r,s) \leq 2$. From this we also see that, if
$\varpi=2\mathfrak{p}^{k-2}$ or $2\overline{\mathfrak{p}}^{k-2}$, then $2|(r, s)$ for every $r,s$ given by \eqref{mod2gen},
so that in this case $(r,s)=2$. On the other hand, if $\varpi=\mathfrak{p}^k$ or $\overline{\mathfrak{p}}^k$ then $(r,s)=1$ for every $r,s$
given by \eqref{mod2gen}. For if $2 |(r,s)$, then by setting $r=2r_1, s=2s_1$ and taking norms on both sides of \eqref{mod2gen},
we see that $2^{k} |r^2_1+7s^2_1$. As $k \geq 3$, this implies that $r_1 \equiv s_1 \pmod 2$. Thus,
$(r_1+s_1\sqrt{-7})=2((r_1+s_1\sqrt{-7})/2)$ with $(r_1+s_1\sqrt{-7})/2 \in \mathcal{O}_K$, again contradicting \eqref{mod2gen}. \newline

  We then conclude that for $d=2^k\prod^n_{i=1} p^{\alpha_i}_i$ with $k \geq 3$, there are exactly $2^{n+1}$ ways to represent $d$ as in \eqref{drepn} with $(r,s)=1$
and also exactly $2^{n+1}$ ways to represent $d$ as in \ref{drepn} with $(r,s)=2$. We now take corresponding to the representation of $d$
the $2^{n+1}$ ways to obtain different pairs of $(r,s)$ (up to the sign of $s$) with $(r,s)=1$ and corresponding to the representation of $2d$
the $2^{n+1}$ ways to obtain different pairs of $(r,s)$ (up to the sign of $s$) with $(r,s)=1$. This way we have a total of $2^{n+2}$ ways of obtaining solutions $v$ via \eqref{vsoln}.  Similar to our arguments above, one then shows that these roots are all distinct. 
As $2^{n+2}$ equals the total number of solutions of \eqref{congeqn}, the one-to-one
correspondence thus follows in this case. \newline

   Lastly, when $N |d$, we note that equation \eqref{congeqn} has only one solution $v \equiv 0 \pmod d$ when 
$d=N$ and no solutions when $d=N^n$ with $n \geq 2$.
It follows that in this case \eqref{congeqn} is solvable only when $d=Nd_1$ with $(N, d_1)=1$. As any solution $v$ must satisfy $v \equiv 0 \pmod N$ and 
$(v/N, d_1)=1$, we can write
$v=N\overline{v}_1$ to derive the desired conclusion.
\end{proof}

Now, using Lemma \ref{repncorresp}, we can establish the following
\begin{lemma}
\label{approx}
   Let $N \in \{ 1, 2, 67, 163 \}$. For each solution $v$ of \eqref{congeqn}, there exists a fraction $a_v/q_v$
with $(q_v, a_v)=1, q_v>0$ such that $c_1 d^{1/2} \leq q_v \leq c_2d^{1/2}$ with the constants $c_1, c_2$ the choice of whose values depend on $N$ only, and
\begin{align}
\label{wellapprox}
    \Big | \frac {v}{d} -\frac {a_v}{q_v} \Big | \leq \frac 1{q^2_v}.
\end{align}
\end{lemma}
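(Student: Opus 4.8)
The plan is to read the approximating fraction $a_v/q_v$ directly off the representation of $d$ supplied by Lemma \ref{repncorresp}. I would first reduce to the case $(d,2N)=1$ using the reduction clauses of that lemma: when $N\mid d$ the roots of \eqref{congeqn} descend to roots modulo $d_1=d/N$ via $v\equiv N\overline{v}_1\pmod d$, so an approximation for $d_1$ rescales to one for $d$, and the case $2\mid d$ (for $N\neq 2$) is covered by the corresponding clause of Lemma \ref{repncorresp}. Hence I may fix a representation $4d=r^2+Ns^2$ (or $d=r^2+Ns^2$ when $N=1,2$) with $r,s>0$, together with the root $v\equiv r\overline{s}\pmod d$ from \eqref{vsoln}; the second root and the two sign choices are symmetric.

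From $v\equiv r\overline{s}\pmod d$ and $v^2\equiv -N\pmod d$ I extract the two congruences
\[
 sv\equiv r\pmod d,\qquad rv\equiv -Ns\pmod d,
\]
the latter by multiplying the former by $v$. Writing $sv-r=md$ and $rv+Ns=m'd$ produces two candidate fractions with exact errors
\[
 \Big|\frac{v}{d}-\frac{m}{s}\Big|=\frac{r}{sd},\qquad \Big|\frac{v}{d}-\frac{m'}{r}\Big|=\frac{Ns}{rd}.
\]
Both are in lowest terms: a common factor of $m$ and $s$ divides $r$ by the first relation, hence divides $(r,s)$, which the normalization in \eqref{vsoln} takes to be $1$; the coprimality of $m'$ and $r$ follows in the same way using in addition $(r,N)=1$, which holds because $(d,N)=1$.

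Whether these errors meet \eqref{wellapprox} is governed by the two elementary inequalities
\[
 4rs\le r^2+Ns^2\qquad\text{and}\qquad 4Nrs\le r^2+Ns^2
\]
(the factor $4$ being dropped when $N=1,2$). The first, equivalent to $rs\le d$, holds for every $N\ge 4$ by the arithmetic–geometric mean inequality, so the choice $q_v=s$ always satisfies $|v/d-m/s|\le 1/s^2$; the second, equivalent to $Nrs\le d$, rearranges to $r^2-4Nrs+Ns^2\ge 0$ and holds only when the ratio $r/s$ is large, namely $r/s\ge 2N+\sqrt{4N^2-N}$. For $N=1,2$ both inequalities hold unconditionally, and since $\max(r,s)\asymp d^{1/2}$ one simply takes $q_v=\max(r,s)$. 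For $N=67,163$ I would run a dichotomy on the size of $s$: fix $c_1=c_1(N)$ and set $q_v=s$ when $s\ge c_1 d^{1/2}$ and $q_v=r$ otherwise. In the first regime $q_v=s$ lies in $[c_1 d^{1/2},\,2(d/N)^{1/2}]$ and \eqref{wellapprox} is the first inequality; in the second, $s<c_1 d^{1/2}$ forces $r^2=4d-Ns^2\gg d$, so $q_v=r\asymp d^{1/2}$, and for $c_1$ small enough it also forces $r/s\ge 2N+\sqrt{4N^2-N}$, which is the second inequality.

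The crux, and the step I expect to be the main obstacle, is the compatibility of the two regimes with a single pair of constants: the value of $s$ below which the $s$-branch becomes too short must already lie beyond the threshold $r/s\ge 2N+\sqrt{4N^2-N}$ past which the $r$-branch error is admissible. Balancing these forces $c_1$ to be of size $\asymp 1/N$, and checking that such a choice keeps $c_1 d^{1/2}\le q_v\le c_2 d^{1/2}$ together with \eqref{wellapprox} uniformly in $d$ and in the root $v$ is the one place where the size of $N$ genuinely intervenes. It is exactly this quantitative balancing that the present method can carry out cleanly only for $N\in\{1,2,67,163\}$, which is why the theorem is stated for this restricted subset of \eqref{N}.
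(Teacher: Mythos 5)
Your construction coincides with the paper's: the two exact approximations $|v/d-m/s|=r/(sd)$ and $|v/d-m'/r|=Ns/(rd)$ read off the norm representation $4d=r^2+Ns^2$ supplied by Lemma \ref{repncorresp}, followed by a dichotomy according to whether $|s|\ge c_1d^{1/2}$; the algebra in your middle section (exact errors, lowest-terms check) is correct. The genuine gap is in your opening reduction. For $N=67$ or $163$ (both $\equiv 3\pmod 8$) the congruence \eqref{congeqn} \emph{is} solvable when $d\equiv 2,4\pmod 8$ (any odd $v$ gives $v^2+N\equiv 0\pmod 4$), and the even-modulus clause of Lemma \ref{repncorresp} says nothing about such $d$: it asserts only insolvability when $2^3\mid d$ (for $N\neq 7$) and a correspondence for $N=7$. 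No norm representation is attached to these moduli, so your appeal to ``the corresponding clause'' fails there. The paper instead writes $d=2^ld'$ with $l\in\{1,2\}$ and $d'$ odd, lifts each root as $v=v'+d'j$, and observes that the lemma for such $d$ follows provided one proves, for all moduli coprime to $2N$, the \emph{strengthened} inequality
\[
\Big|\frac{v}{d}-\frac{a_v}{q_v}\Big|\le \frac{1}{2^lq_v^2},\qquad l=2,
\]
i.e.\ the bound $1/(4q_v^2)$ in place of $1/q_v^2$; this is exactly why the paper's \eqref{vdapprx} carries the factor $4$.

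This changes where the size of $N$ genuinely intervenes, and your diagnosis of the crux is misplaced. With your unstrengthened targets $rs\le d$ and $Nrs\le d$, the balancing works for \emph{every} $N\ge 4$: take, say, $c_1=\sqrt{3}/(4N)$; in the branch $s\ge c_1d^{1/2}$ one has $rs\le 2d/\sqrt{N}\le d$, and in the branch $s<c_1d^{1/2}$ one gets $r^2=4d-Ns^2\ge 3d$ and $Nrs\le 2Nc_1d\le d$. Nothing in this distinguishes $67,163$ from $7,11,19,43$, so your argument would equally ``prove'' the lemma for all of \eqref{N} --- which should have been a warning sign, since the paper explicitly excludes those values. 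The true obstruction is the extra factor $4$ forced by the even-modulus lift: the $s$-branch then requires $4rs\le d$, while AM--GM yields only $rs\le 2d/\sqrt{N}$ from $4d=r^2+Ns^2$, forcing $8/\sqrt{N}\le 1$, i.e.\ $N>64$ --- precisely the paper's remark that $N\ge 67>8^2$. (For $N=1,2$ the representation $d=r^2+Ns^2$ has no denominator $4$ and only $l=1$ can occur, so the milder requirement $1/(2q_v^2)$ suffices and holds.) Two minor slips that do not affect the structure: $Nrs\le d$ also holds in the small-ratio range $r/s\le 2N-\sqrt{4N^2-N}$, not only for large $r/s$; and in the case $(r,s)=2$ of \eqref{drepn} one should first replace $r,s$ by $r/2,s/2$ (legitimate since $d$ is odd) before running your coprimality argument.
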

\begin{proof}
   As the case $N=1,2$ is easier, we may assume that $N=67$ or $163$. Note that $N$ belongs to the set given in \eqref{N}. When $N|d$, then Lemma \ref{repncorresp} implies that every
solution $v$ of equation \eqref{congeqn} can be written as $v \equiv N\overline{v}_1 \pmod {Nd_1}$ with $(N, d_1)=1$. In this case it suffices to establish
\eqref{wellapprox} with $v/d$ replaced by $v_1/d_1$. Hence, it remains to prove \eqref{wellapprox} for both $v$ and $\overline{v}$ for $(d,N)=1$. As the two cases
are similar, we shall henceforth prove \eqref{wellapprox} for $(d,N)=1$. \newline

   Suppose that $d=2^l d'$ with $l=1$ or $2$ and $(d',2)=1$. Then it is easy to see that the solutions to \eqref{congeqn} can be written as
$v'+d'j$ for some $0 \leq j \leq 2^{l}-1$ with $v'$ being the solution to equation \eqref{congeqn} modulo $d'$. Note that we have
\begin{align*}
  \Big | \frac {v'+d'j}{2^l d'} -\frac {a_{v'}+jq_{v'}}{2^l q_{v'}}  \Big |=\frac 1{2^l}
\Big | \frac {v'}{d'} -\frac {a_{v'}}{q_{v'}}\Big |.
\end{align*}
   By writing $a_v/q_v= (a_{v'}+jq_{v'})/(2^l q_{v'})$ with $(a_v, q_v)=1$, we see that
in this case the assertion of the lemma holds for $v$ with some $q_v | 2^l q_{v'}$ provided that we can show that the assertion of the lemma holds for all $(d, 2N)=1$
with \eqref{wellapprox} being replaced by
\begin{align*}
   \frac 1{2^l} \Big | \frac {v}{d} -\frac {a_{v}}{q_{v}}\Big | \leq \frac 1{(2^lq_v)^2}.
\end{align*}

   As the case $l=2$ of the above inequality implies the case $l=0, 1$ (the case $l=0$ being inequality \eqref{wellapprox}). It suffices to establish the
above inequality for $l=2$.
Note that for each $d, (d, 2N)=1$ such that $v^2 +N \equiv 0 \pmod d$ is solvable, there is by Lemma \ref{repncorresp}
a one-to-one correspondence between solutions $v  \pmod d$ to $v^2 +N \equiv 0 \pmod d$
and representations of $d$ given by \eqref{drepn}. If we replace $s, r$ in \eqref{drepn} by $s_v$ and $r_v$
respectively to indicate the dependence on the corresponding $v$, then for any given $r_v, s_v$, we can take
for the corresponding $v \pmod d$ to be the residue class $\pm r_v \overline{s}_v \pmod d$. When $(r_v, s_v)=2$, we may further replace $r_v$ and $s_v$ by $r_v/2$, $s_v/2$ (note that $d$ is odd). Thus, we may assume in what follows that $(r_vs_v, 2)=1$ satisfying $r^2_v+Ns^2_v=4d$.
 For any solution $v$ of \eqref{congeqn} given by \eqref{vsoln}, it follows from \eqref{drepn} that
\begin{align*}
  r^2_v+Ns^2_v \equiv 0 \pmod d.
\end{align*}
  This implies that
\begin{align}
\label{rs}
  r_v\overline{s}_v \equiv -N \overline{r}_v s_v  \pmod d.
\end{align}

   It follows from this and  \eqref{vsoln} that
\begin{align*}
  \frac {v}{d} \equiv \frac {r_v\overline{s}_v}{d} \equiv \frac {r_vs_v\overline{s}_v}{ds_v} \pmod 1.
\end{align*}
   We write $s_v\overline{s}_v=1+ad$ with $a \in \mz$ to get
\begin{align*}
  \frac {v}{d} \equiv \frac {ar_v}{s_v}+\frac {r_v}{ds_v} \pmod 1.
\end{align*}

  Note that
\begin{align*}
  -4 \equiv 4ad \equiv 4a(r^2_v+Ns^2_v) \equiv 4ar^2_v \pmod {s_v}.
\end{align*}
   As $(r_vs_v,2)=1$, it follows that
\begin{align*}
  ar_v \equiv -\tilde{r_v} \pmod {s_v},
\end{align*}
   where $\tilde{r_v}$ denotes the number satisfying $0< \tilde{r_v}<|s_v|$ and $r_v\tilde{r_v} \equiv 1 \pmod {s_v}$. \newline

   We then deduce that
\begin{align}
\label{sdenom}
  \frac {v}{d} \equiv -\frac {\tilde{r_v}}{s_v}+\frac {r_v}{ds_v} \pmod 1.
\end{align}

    Similar, we deduce from \eqref{rs} that
\begin{align*}
  \frac {v}{d} \equiv \frac {-Ns_v\overline{r}_v}{d} \equiv \frac {-Ns_vr_v\overline{r}_v}{dr_v} \pmod 1.
\end{align*}
   We write $r_v\overline{r}_v=1+a'd$ with $a' \in \mz$ to get
\begin{align*}
  \frac {v}{d} \equiv \frac {-a'Ns_v}{r_v}-\frac {Ns_v}{dr_v} \pmod 1.
\end{align*}

  Note that, as before,
\begin{align*}
  -4\equiv 4a'd \equiv 4a'(r^2_v+Ns^2_v) \equiv 4a'Ns^2_v \pmod {r_v}.
\end{align*}
   As $(r_vs_v, 2)=1$, it follows that
\begin{align*}
   -a'Ns_v \equiv \tilde{s_v} \pmod {r_v},
\end{align*}
where $\tilde{s_v}$ denotes the number satisfying $0< \tilde{s_v}<|r_v|$ and $s_v\tilde{s_v} \equiv 1 \pmod {r_v}$. \newline

   We then deduce that
\begin{align} \label{rdenom}
  \frac {v}{d} \equiv \frac {\tilde{s_v}}{r_v}-\frac {Ns_v}{dr_v} \pmod 1.
\end{align}

   Combining \eqref{sdenom} and \eqref{rdenom}, we see that there exists integers  $\alpha(v, d)$, $\beta(v, d)$ such that
\begin{align}
\label{vdapprx}
  \Big | \frac {v}{d} +\frac {\tilde{r}_v+\alpha(v,d)s_v}{s_v}\Big |=\frac {|r_v|}{d|s_v|} \leq \frac 1{4s^2_v}, \quad
  \Big | \frac {v}{d} -\frac {\tilde{s}_v+\beta(v,d)r_v}{r_v}\Big |=\frac {N|s_v|}{d|r_v|} \leq \frac 1{4r^2_v},  
\end{align}
where the first inequality above follows by noting that
\begin{align*}
    |r_vs_v| \leq \frac {r^2_v+Ns^2_v}{2\sqrt{N}} = \frac {2d}{\sqrt{N}} \; \mbox{and} \; N \geq 67 > 8^2.
\end{align*}
This marks the restriction on the size of $N$ the precludes us from taking $N$ from the full set in \eqref{N}.  The second inequality in \eqref{vdapprx} is easily seen to be valid when $|s_v| \leq \gamma(N) |r_v|$ for some fixed sufficiently small positive constant $\gamma(N)$ depending only on $N$.
We can now take $a_v/q_v$ with $(a_v, q_v)=1$ such that $a_v/q_v=-(\tilde{r}_v+\alpha(v,d)s_v)/s_v$ when  $|s_v| \geq \gamma(N) |r_v|$ and $a_v/q_v=(\tilde{s}_v+\beta(v,d)r_v)/r_v$
when $|s_v| \leq \gamma(N) |r_v|$. This completes the proof of the lemma. 
\end{proof}

  We now use Lemma \ref{approx} to establish the following large sieve result, which is a generalization of Lemma 2 in \cite{Yu}:
\begin{lemma} \label{SDHN}
   Let $N \in \{ 1,2, 67, 163 \}$.  For positive integers $D, H, M$, set
\begin{align*}
   S(D,H,M;N)=\sum_{d \sim D}\sum_{\substack{v \bmod d \\ v^2+N\equiv 0 \bmod d}}\sum_{h \leq H}\frac 1h \left| \sum_{n \leq M} e \left( \frac {hnv}{d} \right) \right|.
\end{align*}
   Then for $D$ sufficiently large, $H,M > 3$ and any $\varepsilon>0$, we have
\begin{align*}
   S(D,H,M;N) \ll_{\varepsilon}(DMH)^{\varepsilon}(D+M)\sqrt{D}.
\end{align*}
\end{lemma}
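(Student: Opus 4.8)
The plan is to reduce the inner exponential sum to a sawtooth quantity and then to play the averaging in $(d,v)$ supplied by Lemma \ref{approx} against the harmonic weight $1/h$. First I would invoke the geometric‑series bound
\[
\Big|\sum_{n\le M}e\Big(\frac{hnv}{d}\Big)\Big|\ll\min\Big(M,\frac{1}{\|hv/d\|}\Big),
\]
so that it suffices to estimate $\sum_{h\le H}\tfrac1h\sum_{d\sim D}\sum_{v}\min(M,\|hv/d\|^{-1})$. Splitting the range of $\|hv/d\|$ dyadically, this is governed by the counting function
\[
\mathcal{N}_h(\Delta)=\#\{(d,v):d\sim D,\ v^2+N\equiv0\!\!\pmod d,\ \|hv/d\|\le\Delta\}.
\]
A routine dyadic summation (using $M\,\mathcal{N}_h(1/M)$ for the part where $\|hv/d\|\le1/M$ and $2^{j}\mathcal{N}_h(2^{-j})$ for the dyadic pieces) shows that $\sum_{d,v}\min(M,\|hv/d\|^{-1})\ll(DM)^{\varepsilon}(D+M\sqrt D)$, whence $S(D,H,M;N)\ll(DHM)^{\varepsilon}(D+M\sqrt D)\ll(DHM)^{\varepsilon}(D+M)\sqrt D$, \emph{provided} one has the uniform bound $\mathcal{N}_h(\Delta)\ll(DHM)^{\varepsilon}(\Delta D+\sqrt D)$. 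The whole lemma is thereby reduced to this single counting estimate.

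The role of Lemma \ref{approx} is to supply the spacing that feeds a large sieve. It attaches to each root $v\bmod d$ a fraction $a_v/q_v$ with $(a_v,q_v)=1$, $q_v\asymp\sqrt D$ and $|v/d-a_v/q_v|\le q_v^{-2}$. Tracing the construction back through \eqref{sdenom}–\eqref{rdenom}, the approximant is built from the generator $(r_v,s_v)$ of Lemma \ref{repncorresp}, and since $(\tilde r_v,s_v)=(\tilde s_v,r_v)=1$ the fraction is essentially in lowest terms; conversely $a_v/q_v$ recovers $(r_v,s_v)$, hence $(d,v)$, up to $O(D^{\varepsilon})$ ambiguity. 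Thus the family $\{a_v/q_v\}_{(d,v)}$ consists of essentially distinct Farey fractions of order at most $c_2\sqrt D$, so these points are $\gg D^{-1}$‑separated with multiplicity $O(D^{\varepsilon})$. This is exactly the hypothesis needed for the large sieve inequality
\[
\sum_{d\sim D}\sum_v\Big|\sum_{n\le L}a_n\,e\Big(\frac{nv}{d}\Big)\Big|^2\ll(DL)^{\varepsilon}(D+L)\sum_{n\le L}|a_n|^2,
\]
and it is this inequality, together with its dual controlling the root sums $\sum_{d,v}e(mv/d)$, that will ultimately produce the $\sqrt D$ in $\mathcal{N}_h(\Delta)$.

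The hard part will be the \emph{uniformity in $h$}. Multiplication by $h$ destroys the Farey spacing on which everything rests: the approximation $v/d=a_v/q_v+O(D^{-1})$ becomes $hv/d=ha_v/q_v+O(hD^{-1})$, so the error is inflated to $hD^{-1}$, which already exceeds the $\asymp D^{-1}$ separation of the approximants for every $h\ge2$, and $ha_v/q_v$ no longer carries an approximation of quality $1/(\text{denominator})^2$. Hence the spacing of $\{a_v/q_v\}$ cannot simply be transferred to $\{hv/d\}$. Bounding $\mathcal{N}_h(\Delta)$ by the triangle inequality — counting the Farey points with $\|ha_v/q_v\|\le\Delta+hD^{-1}$ — introduces a spurious factor $h$, and that factor, weighted by $1/h$ and summed over $h\le H$, would reinstate a fatal power of $H$. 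This is the central obstacle.

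To remove it I would estimate $\mathcal{N}_h(\Delta)$ not by the triangle inequality but through a nonnegative Fourier majorant $\Phi\ge\mathbf 1_{\{\|x\|\le\Delta\}}$ with $\widehat{\Phi}$ supported in $[-K,K]$, $K\asymp\Delta^{-1}$. This writes $\mathcal{N}_h(\Delta)\le\widehat{\Phi}(0)\,\#\{(d,v)\}+\sum_{1\le|k|\le K}\widehat{\Phi}(k)\sum_{d,v}e\big(\tfrac{khv}{d}\big)$, whose main term is $\asymp\Delta D$ and is manifestly free of $h$, while the off‑diagonal is controlled by the root sums $\sum_{d,v}e(mv/d)$. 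The large sieve extracts genuine cancellation from these root sums, which is precisely what cancels the spurious $h$ and yields the $h$‑uniform estimate $\mathcal{N}_h(\Delta)\ll(DHM)^{\varepsilon}(\Delta D+\sqrt D)$. Establishing adequate control of the root sums $\sum_{d,v}e(mv/d)$ across the relevant range of $m=kh$ — via the spacing of the approximants furnished by Lemma \ref{approx}, and hence only for those $N$ (here $N\ge67$) for which that lemma holds — is the crux of the argument.
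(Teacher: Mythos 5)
Your opening reduction (the geometric-series bound followed by a dyadic decomposition of $\|hv/d\|$) coincides with the paper's first step, but beyond that point the argument has a genuine gap: you reduce the lemma to the uniform counting estimate $\mathcal{N}_h(\Delta)\ll(DHM)^{\varepsilon}(\Delta D+\sqrt D)$ and never prove it --- your closing sentence concedes that controlling the root sums $\sum_{d\sim D}\sum_v e(mv/d)$ remains ``the crux''. Moreover, the crux as you have set it up is out of reach with the tools you invoke. With a Fourier majorant of length $K\asymp\Delta^{-1}$, the main term is $\asymp\Delta D$, but the off-diagonal terms require cancellation of strength $D^{1/2}$ in the Weyl sums $\sum_{d\sim D}\sum_v e(mv/d)$ at \emph{individual} frequencies $m=kh$ ranging up to $\asymp H\Delta^{-1}$, uniformly in $h$. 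The mean-square large sieve you quote controls only averages over the frequency variable, and the known individual-frequency equidistribution results for roots of quadratic congruences (Duke--Friedlander--Iwaniec type bounds) save only a small power of $D$, nowhere near $D^{1/2}$; an error term $D^{1-\delta}$ with $\delta<1/2$ in $\mathcal{N}_h(\Delta)$ feeds back a contribution $MD^{1-\delta}$ that destroys the target bound $(D+M)\sqrt D$. So your route replaces the lemma by a strictly harder open problem.

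The dilation obstruction you correctly identified (multiplication by $h$ ruining the Farey spacing) is avoided in the paper by inverting your order of summation: one fixes the pair $(d,v)$ and treats the sum over $h$ with the classical estimate \cite[Lemma 1]{Yu}, which states that if $|\alpha-a/q|\le q^{-2}$ with $(a,q)=1$, $q>0$, then
\begin{align*}
\sum_{t \leq T} \min \left( \frac{M}{t}, \frac{1}{\| \alpha t \|} \right) \ll \left( Mq^{-1}+T+q \right)\log (2qT).
\end{align*}
Lemma \ref{approx} supplies, for each single root, the approximation \eqref{wellapprox} with $q_v\asymp d^{1/2}$, so after splitting $h$ into dyadic ranges $h\sim J$ with the weight $J^{-1}$, each pair $(d,v)$ contributes $\ll (DMH)^{\varepsilon}\bigl(Md^{-1/2}+d^{1/2}\bigr)$; since there are $\ll d^{\varepsilon}$ roots per modulus, \emph{trivial} summation over the $\ll D^{1+\varepsilon}$ pairs already yields $(DMH)^{\varepsilon}(D+M)\sqrt D$. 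No spacing between fractions attached to different moduli, and no cancellation in root sums, is needed: Lemma \ref{approx} is used pointwise rather than as input to a large sieve over the family, and the uniformity-in-$h$ problem you flagged never arises because $h$ is the averaging variable of the classical lemma instead of a fixed dilation.
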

\begin{proof}
Using the well-known bound for geometric sums, 
\begin{align*}
  \sum_{n \leq M} e \left( \frac {hnv}{d} \right) \ll \min \left( M, \frac 1{\|hv/d \|} \right),
\end{align*}
we deduce that
\begin{align} \label{Sbound}
   S(D,H,M; N) \ll \sum_{d \sim D}\sum_{\substack{v \bmod d \\ v^2+N\equiv 0 \bmod d}}\sum_{h \leq H}\frac 1h\min \left( M, \frac 1{\|hv/d \|} \right).
\end{align}

   We note that \cite[Lemma 1]{Yu} implies that for any real number $\alpha$ satisfying
\begin{align*}
  \Big | \alpha -\frac a{q} \Big | \leq \frac 1{q^2},
\end{align*}
  with $(a, q)=1, q>0$, then
\begin{align*}
   \sum_{t \leq T} \min \left( \frac M{t}, \frac 1{\| \alpha t \| } \right) \ll \left( Mq^{-1}+T+q \right)\log (2qT).
\end{align*}

  We now apply the above bound with $\alpha=v/d$ to the right-hand side of \eqref{Sbound} and Lemma \ref{approx} to get that (breaking $h$ into dyadic intervals and mindful of the relation $q_v \sim d^{-1/2}$)
\begin{align*}
   S(D,H,M;N) \ll & \log H \max_{J \ll H}J^{-1}\sum_{d \sim D}\sum_{\substack{v \bmod d \\ v^2+N\equiv 0 \bmod d}}
\sum_{h \sim J}\min \Big(\frac {MJ}{h}, \frac {1}{\|hv/d \|} \Big) \\
\ll & (DMH)^{\varepsilon}\max_{J \ll H}J^{-1} \sum_{d \sim D}\sum_{\substack{v \bmod d \\ v^2+N\equiv 0 \bmod d}} \left( \frac{MJ}{|q_v|}+J+|q_v| \right) \\
\ll & (DMH)^{\varepsilon}\sum_{d \sim D}\sum_{\substack{v \bmod d \\ v^2+N\equiv 0 \bmod d}}(Md^{-1/2}+d^{1/2}) \ll (DMH)^{\varepsilon}(D+M)\sqrt{D}.
\end{align*}
  This completes the proof of the lemma. 
\end{proof}

   We define $\rho_N(d)$ to be the number of solutions of the congruence $u^2+Nv^2 \equiv 0 \pmod d$ subject to $0<u, v \leq d$. Let $F_N(s)$ be the
Dirichlet series associated with $\rho_N(n)/n$:
\begin{align*}
  F_N(s)=\sum^{\infty}_{n=1}\frac {\rho_N(n)/n}{n^s}, \quad \Re(s) \geq 2.
\end{align*}

    Note that $\rho_N(n)$ is multiplicative.  Thus it suffices to determine the values of $\rho_N$ at prime powers.  For any prime $p$ and integer $\alpha \geq 1$, suppose thatt $(v, p^{\alpha})=p^{\beta}$ with $\beta \leq [\alpha/2]$ so that
\begin{align}
\label{uveqn}
  u^2+Nv^2 \equiv 0 \pmod {p^{\alpha}}, \quad 0< u, v \leq p^{\alpha},
\end{align}
  is equivalent to
\begin{align*}
  p^{2\beta}(u^2+Nv^2) \equiv 0 \pmod {p^{\alpha}}, \quad 0< u, v \leq p^{\alpha-\beta}, (uv, p)=1.
\end{align*}
   The above congruence is further equivalent to
\begin{align*}
   u^2+Nv^2 \equiv 0 \pmod {p^{\alpha-2\beta}}, \quad 0< u, v \leq p^{\alpha-\beta}, (uv, p)=1.
\end{align*}
   It is easy to see that the above equation has $\varphi(p^{\alpha-2\beta})\rho_{0, N}(p)$ solutions satisfying $0< u, \; v \leq p^{\alpha-2\beta}, (uv, p)=1$, where for any integer $d$, 
\begin{equation} \label{rho0def}
\rho_{0,N}(d) = \# \left\{ x \bmod{d} : x^2+N \equiv 0 \pmod d \right\} . 
\end{equation}
It follows that the total number of solutions of the above equation is $p^{2\beta}\varphi(p^{\alpha-2\beta})\rho_{0,N}(p)=\varphi(p^{\alpha})\rho_{0, N}(p)$. \newline

   On the other hand, if $p^{\beta}|(v, p^{\alpha})$ with $\beta = [\alpha/2]+1$, equation \eqref{uveqn} is equivalent to
\begin{align*}
  p^{2\beta}(u^2+Nv^2) \equiv 0 \pmod {p^{\alpha}}, \quad 0< u, \; v \leq p^{\alpha-\beta}.
\end{align*}
  The above equation has $p^{2(\alpha-\beta)}$ solutions. \newline

  Summarizing our discussions above, we get
\begin{align*}
  \rho_N(p^{\alpha})=
\displaystyle \begin{cases}
    \displaystyle  \sum^{\alpha/2-1}_{\beta=0}\varphi(p^{\alpha})\rho_{0,N}(p)+p^{\alpha},  & \mbox{if} \; 2|\alpha, \\ \\
    \displaystyle \sum^{(\alpha-1)/2}_{\beta=0}\varphi(p^{\alpha})\rho_{0,N}(p)+p^{\alpha-1}, & \mbox{if} \; 2 \nmid \alpha. \\
    \end{cases}
\end{align*}

    One checks that for $(p, 2N)=1$,  $\rho_{0,N}(p)=1+\chi_N(p)$, where $\chi_N=\leg {-N}{\cdot}$ is the Kronecker symbol.
 Combining this with the above expression for $\rho_N(p^{\alpha})$, we deduce that for $\Re(s) \geq 2$,
\begin{align*}
  F_N(s)=\zeta(s)L(s, \chi_N)G_N(s),
\end{align*}
   where $\zeta(s)$ is the Riemann zeta function, $L(s, \chi_N)$ is the Dirichlet $L$-function associated with $\chi_N$ and $G_N(s)$ is given by the Euler product
\begin{align*}
  G_N(s)=\prod_p G_{p,N}(s) \quad  \mbox{with} \quad  G_{p,N}(s)=1-\frac {\chi_N(p)}{p^{s+1}},  \quad (p, 2N)=1.
\end{align*}

   We now proceed similarly as in the proofs of \cite[Lemmas 4, 6, 7]{Gaf2} to obtain the following
\begin{lemma}
\label{rhod}
   Suppose $y \geq 3$, we have
\begin{align*}
   \sum_{d \leq y}\rho_N(d) =& A_Ny^2+O\left( y^{4/3}(\log y)^2 \right), \quad
   \sum_{d \leq y}\frac {\rho_N(d)}{d} = 2A_Ny+O \left( y^{1/3}(\log y)^2 \right), \\
    \sum_{d \leq y}\frac {\rho_N(d)}{d^2} =& 2A_N\log y+2\int\limits^{\infty}_1\frac {E_N(t)}{t^2}dt+A_N+O\left( y^{-2/3}(\log y)^2 \right),
\end{align*}
   where
\begin{align}
\label{EN}
   E_N(t)=\sum_{d \leq t}\rho_N(d)-A_Nt^2 \ll t^{4/3}(\log t)^2,
\end{align}
   and
\begin{align}
\label{A}
   A_N=\frac {L(1, \chi_N)G_N(2)}{2}.
\end{align}
\end{lemma}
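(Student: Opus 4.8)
The three estimates are linked: the first is the substantive one, while the second and third follow from it by partial summation. The plan is to exploit the factorization $F_N(s)=\zeta(s)L(s,\chi_N)G_N(s)$ obtained above. Writing $G_N(s)=\sum_{c}g_N(c)c^{-s}$, the arithmetic function $g_N$ is multiplicative, supported on squarefree integers, with $g_N(p)=-\chi_N(p)/p$ and hence $|g_N(c)|\le 1/c$. Comparing Dirichlet coefficients on both sides of the factorization gives $\rho_N(n)/n=\sum_{abc=n}\chi_N(b)g_N(c)$, so that
\begin{equation*}
\sum_{d\le y}\rho_N(d)=\sum_{abc\le y}abc\,\chi_N(b)g_N(c)=\sum_{c\le y}c\,g_N(c)\,T(y/c),\qquad T(x):=\sum_{n\le x}n\sum_{b\mid n}\chi_N(b).
\end{equation*}

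To evaluate the first formula I would begin with the twisted divisor sum $D(t):=\sum_{n\le t}\sum_{b\mid n}\chi_N(b)=\sum_{be\le t}\chi_N(b)$. Since $N$ is fixed, $\chi_N$ is a fixed non-principal character, so the partial sums $\sum_{b\le X}\chi_N(b)$ are bounded; applying the Dirichlet hyperbola method and estimating the resulting sawtooth sum $\sum_{b\le t}\chi_N(b)\{t/b\}$ by exponential--sum (Voronoi-type) methods yields $D(t)=L(1,\chi_N)\,t+O(t^{1/3}(\log t)^{2})$. Partial summation against the weight $n$ then gives $T(x)=\tfrac12 L(1,\chi_N)x^{2}+O(x^{4/3}(\log x)^{2})$, the exponent $4/3$ emerging from the terms $xD(x)$ and $\int_1^x D(t)\,dt$. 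Substituting this and summing over $c$, the rapidly convergent weights $c\,g_N(c)$ (bounded by $1$ on squarefrees) permit extending the $c$--sum to infinity: the main term is $\tfrac12 L(1,\chi_N)y^{2}\sum_{c}g_N(c)/c$, which equals $A_Ny^2$ with $A_N$ as in \eqref{A}, while $\sum_{c\le y}(y/c)^{4/3}(\log(y/c))^{2}\ll y^{4/3}(\log y)^2$ bounds the error. This proves the first asymptotic and, along the way, the bound \eqref{EN} for $E_N$.

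For the remaining two formulas I would apply Abel summation to $M(t):=\sum_{d\le t}\rho_N(d)=A_Nt^{2}+E_N(t)$. The second follows from
\begin{equation*}
\sum_{d\le y}\frac{\rho_N(d)}{d}=\frac{M(y)}{y}+\int_1^y\frac{M(t)}{t^{2}}\,dt ,
\end{equation*}
where the polynomial part of $M$ contributes $2A_Ny$ and, since $E_N(t)\ll t^{4/3}(\log t)^2$, the $E_N$--terms contribute $O(y^{1/3}(\log y)^2)$. The third follows from
\begin{equation*}
\sum_{d\le y}\frac{\rho_N(d)}{d^{2}}=\frac{M(y)}{y^{2}}+2\int_1^y\frac{M(t)}{t^{3}}\,dt ,
\end{equation*}
in which the polynomial part yields $2A_N\log y+A_N$, while extending the $E_N$--integral to infinity---legitimate because $E_N(t)t^{-3}\ll t^{-5/3}(\log t)^2$ is integrable by \eqref{EN}---produces the convergent constant $2\int_1^\infty E_N(t)t^{-3}\,dt$ together with the tail error $O(y^{-2/3}(\log y)^2)$.

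The main obstacle is the twisted divisor estimate $D(t)=L(1,\chi_N)t+O(t^{1/3}(\log t)^2)$: reaching the Dirichlet--divisor exponent $1/3$ for the sawtooth sum $\sum_{b\le t}\chi_N(b)\{t/b\}$ requires the exponential--sum input (exponent pairs or Voronoi summation for $\chi_N$), and is precisely the ingredient that forces the $t^{4/3}(\log t)^2$ shape of $E_N$ and hence the error terms in all three formulas; this is where I would follow the treatment of \cite[Lemmas 4, 6, 7]{Gaf2} most closely.
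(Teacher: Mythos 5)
Your proof is correct and is essentially the paper's own argument: the paper supplies no details for this lemma, deferring entirely to \cite[Lemmas 4, 6, 7]{Gaf2}, and the intended method there is exactly your convolution of the factorization $F_N(s)=\zeta(s)L(s,\chi_N)G_N(s)$ against the character-twisted divisor sum $D(t)=L(1,\chi_N)t+O\left(t^{1/3}(\log t)^2\right)$ (the exponent $1/3$ coming from van der Corput's method or, for these $N$, from the ellipse lattice-point count the paper cites in its treatment of $Q_N(x)$), followed by Abel summation for the second and third formulas. One caveat concerning constants rather than your argument: your main term $\tfrac12 L(1,\chi_N)y^2\sum_c g_N(c)/c=\tfrac12 L(1,\chi_N)\prod_p\left(1-\chi_N(p)p^{-2}\right)y^2$ is the analytically correct one, but under the paper's printed Euler factor $G_{p,N}(s)=1-\chi_N(p)p^{-(s+1)}$ this equals $\tfrac12 L(1,\chi_N)G_N(1)$, not $\tfrac12 L(1,\chi_N)G_N(2)$, so \eqref{A} carries a shift misprint that your computation silently corrects --- just as you correctly work with $\int_1^{\infty}E_N(t)t^{-3}\,\dif t$ where the lemma's display misprints $t^{-2}$ (divergent as written, given \eqref{EN}; the paper itself uses the weight $t^{-3}$ when applying the lemma, as in \eqref{RNestm}); note also that the paper defines $G_{p,N}$ only for $(p,2N)=1$, so your claim that $g_N$ is squarefree-supported with $|g_N(c)|\le 1/c$ needs a harmless adjustment at the finitely many primes dividing $2N$ (where the local factor need not be linear in $p^{-s}$), whose bounded, absolutely convergent coefficients change none of your estimates.
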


\section{Proof of Theorem \ref{mainthm}}
\label{sec 3}

    Recalling the definition of $S_N(x)$ from \eqref{SN}, we have
\begin{align*}
  S_N(x)=\sum_{\substack{m, n \leq x\\ n^2+Nm^2=kl}}1.
\end{align*}
   Similar to Dirichlet's hyperbola method, we split $S_N(x)$ into three sums by first suming over one of the $k,l$ being  $\leq \sqrt{1+N}x$
and then subtracting the overcount arising from $\max{k,l} \leq \sqrt{1+N}x$. Thus, we obtain
\begin{equation} \label{Sx}
  S_N(x) =2R_N(x)-Q_N(x)-T_N(x)
\end{equation}
where
\[  R_N(x) =\sum_{k \leq \sqrt{1+N}x}\sum_{\substack{m \leq x, n \leq x \\ n^2+Nm^2 \equiv 0 \bmod k}} 1,  \quad  Q_N(x) = \sum_{k \leq Nx/\sqrt{1+N}}\sum_{\substack{n^2+Nm^2 \leq kx\sqrt{1+N}\\ n^2+Nm^2 \equiv 0 \bmod k }}1 \]
and
\[ T_N(x) = \sum_{Nx/\sqrt{1+N} < k \leq \sqrt{1+N}x}\sum_{\substack{m \leq x, n \leq x\\ n^2+Nm^2 \leq kx\sqrt{1+N}\\ n^2+Nm^2 \equiv 0 \bmod k }}1 . \]
Here we have further written the subtracting part as the sum of $Q_N(x)$ and $T_N(x)$ so that the conditions $m \leq x$, and $n \leq x$
 are naturally satisfied for $Q_N(x)$.

\subsection{Treatment of $R_N(x)$}
\label{secR}

   We write $(m^2, k) = ab^2$, where $a$ is square-free so that $n^2 + Nm^2 \equiv 0  \pmod k$ implies that
$ab | (m, n)$. Thus replacing $m, n, k$ by $abm, abn, ab^2d$ respectively and noting that the condition $(m^2,k)=ab^2$ becomes $(am, d)=1$, we can recast $R_N(x)$ as
\begin{align*}
  R_N(x) =\sum_{\substack{ab^2d \leq \sqrt{1+N}x \\ (a,d)=1}}\mu^2(a)\sum_{\substack{m \leq x/ab \\ (m,d)=1}} \
\sum_{\substack{n \leq x/ab \\ n^2+Nm^2 \equiv 0 \bmod d}}1,
\end{align*}
where $\mu$ denotes the M\'obius function. \newline

  We then deduce that
\begin{equation} \label{RN}
\begin{split}
  R_N(x) =\sum_{\substack{ab^2d \leq \sqrt{1+N}x \\ (a,d)=1}} &\mu^2(a)\sum_{\substack{m \leq x/ab\\ (m,d)=1}}
\sum_{\substack{v \bmod d \\ v^2+Nm^2 \equiv 0 \bmod d}}
\sum_{\substack{n \leq x/ab \\ n \equiv v \bmod d}}1 \\
=& \sum_{\substack{ab^2d \leq \sqrt{1+N}x \\ (a,d)=1}}\mu^2(a)\sum_{\substack{m \leq x/ab\\ (m,d)=1}}
\sum_{\substack{v \bmod d \\ v^2+N\equiv 0 \bmod d}}\left (
\left [\frac {x/ab-mv}{d} \right ]-\left [\frac {-mv}{d}\right ] \right ) = R_{0,N}(x)+E_{R, N}(x) ,
\end{split}
\end{equation}
where
\[ R_{0,N}(x) =  x\sum_{\substack{ab^2d \leq \sqrt{1+N}x \\ (a,d)=1}}\frac {\mu^2(a)\rho_{0, N}(d)}{abd}\sum_{\substack{m \leq x/ab\\ (m,d)=1}}1 \]
and
\[ E_{R,N}(x) =\sum_{\substack{ab^2d \leq \sqrt{1+N}x \\ (a,d)=1}}\mu^2(a)\sum_{\substack{m \leq x/ab\\ (m,d)=1}} \ \sum_{\substack{v \bmod d \\ v^2+N\equiv 0 \bmod d}} \left ( \psi\left (\frac {-mv}{d}\right)- \psi \left (\frac {x/ab-mv}{d}\right) \right ) \]
We note that it follows from a result of J. D. Vaaler \cite[Theorem 18]{Vaaler} that
for every positive integer $H$, there is a trigonometric polynomial $\psi^*_H$ of degree $H$ such that
\begin{align*}
  \left| \psi(t) -\psi^*_H(t) \right| \leq \frac {1}{2H+2}\sum_{|h| \leq H}\left (1-\frac {|h|}{H+1}\right) e(ht),
\end{align*}
  where
\begin{align}
\label{psistar}
   \psi^*_H(t)=\sum_{1 \leq |h| \leq H} g(h) e(ht),
\end{align}
  with the complex coefficients $g(h)$ satisfying $|g(h)|<|h|^{-1}$. \newline

We then deduce from this that for any integer $H = H(a, b, x) \geq 1$, we have
\begin{equation*}
\begin{split}
E_{R,N}(x) = \sum_{\substack{d \leq \sqrt{1+N}x/ab^2 \\ (a,d)=1}} & \mu^2(a)\sum_{\substack{m \leq x/ab\\ (m,d)=1}}\sum_{\substack{v \bmod d \\ v^2+N\equiv 0 \bmod d}}
\left ( \psi^*_H \left(\frac {-mv}{d} \right )- \psi^*_H \left (\frac {x/ab-mv}{d} \right ) \right )\\
&+O\left (\left |\sum_{\substack{d \leq \sqrt{1+N}x/ab^2 \\ (a,d)=1 }}\ \sum_{\substack{m \leq x/ab\\ (m,d)=1}}\sum_{\substack{v \bmod d \\ v^2+N\equiv 0 \bmod d}}
\sum_{1 \leq |h| \leq H}g(h,d)e \left( \frac {-mhv}{d} \right)\right|+\frac {x^{2+\varepsilon}}{Ha^2b^3}\right ),
\end{split}
\end{equation*}
   where
\[ g(h,d)=\frac 1H\left ( 1-\frac {|h|}{H+1}\right )(1+e \left( \frac {hx}{abd} \right) ). \]

  Note that the last term in the error term comes from the estimation for $g(0,d) \ll 1/H$
   and summing over $h,v,m,d$ trivially, by noting that $\rho_{0,N}(d) \ll 2^{\omega(d)}$, which is bounded above by the number of divisors of $d$. \newline

 It follows from \eqref{psistar} that in order to estimate $E_{R,N}(x)$,  we need to deal with sums of the form
\begin{align*}
  \sum_{\substack{d \leq \sqrt{1+N}x/ab^2 \\(a,d)=1 }}\sum_{\substack{m \leq x/ab\\ (m,d)=1}}\sum_{\substack{v \bmod d \\ v^2+N\equiv 0 \bmod d}}
\sum_{1 \leq |h| \leq H}g'(h,d)e(\frac {-mhv}{d})
\end{align*}
   where $g'(h,d)=g(h,d)$ or $g(h)(1-e(hx/abd))$. We certainly have $|g'(h,d) |\leq 2|h|^{-1}$.  Now such a sum is, using the M\"obius function to detect the condition $(m,d)=1$,
\begin{equation} \label{Eest}
  \sum_{\substack{rl \leq \sqrt{1+N}x/ab^2 \\ (a,rl)=1 }}\mu(l) \sum_{\substack{n \leq x/abl}} \sum_{\substack{v \bmod {rl} \\ v^2+N\equiv 0 \bmod {rl}}} \sum_{1 \leq |h| \leq H}g'(h,rl)e \left(\frac {-nhv}{r} \right) .
  \end{equation}

   Note that the solutions to the congruence equation
\begin{align*}
     v^2+N\equiv 0 \pmod {rl}
\end{align*}
    are all coming from the solutions to the congruence equation
\begin{align*}
     v^2_1+N\equiv 0 \pmod {r}
\end{align*}
    so that $v$ can be written as
\begin{align*}
     v \equiv v_1+nr \pmod {rl}.
\end{align*}
   Note further that we have
\begin{align*}
  e\left( \frac {-nhv}{r} \right)=e \left( \frac {-nhv_1}{r} \right).
\end{align*}

   As each $v_1$ induces $\rho_{0,N}(rl)/\rho_{0,N}(r)$ solutions, we deduce from Lemma \ref{SDHN} that the expression in \eqref{Eest} is
\begin{align*}
\sum_{\substack{rl \leq \sqrt{1+N}x/ab^2 \\ (a,rl)=1 }} &\mu(l)\frac {\rho_{0,N}(rl)}{\rho_{0,N}(r)} \sum_{\substack{n \leq x/abl}} \sum_{\substack{v \pmod {r} \\ v^2+N\equiv 0 \pmod {r}}}
\sum_{1 \leq |h| \leq H}g'(h,rl)e \left( \frac {-nhv}{r} \right) \\
\ll &  x^{\varepsilon} \sum_{\substack{l \leq x/ab^2 }}\sum_{\substack{r \leq x/ab^2l }} \sum_{\substack{v \pmod {r} \\ v^2+N\equiv 0 \pmod {r}}}
\sum_{1 \leq |h| \leq H}\frac 1{|h|} \left |\sum_{\substack{n \leq x/abl}}e \left( \frac {-nhv}{r} \right) \right | \\
\ll &  x^{\varepsilon} \sum_{\substack{l \leq x/ab^2 }} \left( \frac x{abl}+\frac{x}{ab^2l} \right) \sqrt{\frac{x}{ab^2l}} \ll \frac {x^{3/2+\varepsilon}}{a^{3/2}b^2}.
\end{align*}

    Thus, by choosing $H = 1 +[\sqrt{x}/ab^2]$, we have
\begin{align}
\label{E}
   E_{R,N}(x) \ll \sum_{a,b} \frac {x^{3/2+\varepsilon}}{a^{3/2}b^2}+x^{3/2+\varepsilon} \ll x^{3/2+\varepsilon}.
\end{align}

   Applying the formula
\begin{align*}
    \sum_{\substack{m \leq x/ab \\ (m,d)=1}}1=\frac {\varphi(d)x}{abd}+O(d^{\varepsilon}),
\end{align*}
    we deduce that
\begin{align*}
   R_{0,N}(x)=x^2\sum_{\substack{ab^2d \leq \sqrt{1+N}x \\ (a,d)=1}}\frac {\mu^2(a)\rho_{0,N}(d)\varphi(d)}{(abd)^2}+O(x^{1+\varepsilon}).
\end{align*}

 Now consider the equation $v^2 + Nu^2 \equiv 0 \pmod k$ with $0 < u, v \leq k$. By writing $(u^2,k)=ab^2$ with $a$ square-free and
replacing $u, v, k$ by $abu, abv, ab^2d$, we see that $(au, d)=1$ and that we have $0<u, \; v \leq bd$. From this we easily deduce that
\begin{align*}
  \rho_N(k)=\sum_{\substack{ab^2d=k \\ (a,d)=1}}\mu^2(a)b^2\rho_{0,N}(d)\varphi(d),
\end{align*}
   so that
\begin{align*}
   \sum_{k \leq \sqrt{1+N}x}\frac {\rho_N(k)}{k^2}=\sum_{\substack{ab^2d \leq \sqrt{1+N}x \\ (a,d)=1}}\frac {\mu^2(a)\rho_{0,N}(d)\varphi(d)}{(abd)^2}.
\end{align*}
   We then apply Lemma \ref{rhod} to deduce that
\begin{align*}
    R_{0,N}(x)=x^2\left (2A_N\log x+2\int\limits^{\infty}_1\frac {E_N(t)}{t^3}dt+A_N(\log (N+1)+1) \right )+O(x^{4/3+\varepsilon}).
\end{align*}

   Combining this with \eqref{E}, we obtain via \eqref{RN} that
\begin{align}
\label{RNestm}
    R_{N}(x)=x^2\left (2A_N\log x+2\int\limits^{\infty}_1\frac {E_N(t)}{t^3}dt+A_N(\log (N+1)+1) \right )+O \left( x^{3/2+\varepsilon} \right).
\end{align}

\subsection{Treatment of $Q_N(x)$}
   For any integer $k \geq 0$, we denote $r_N(k)$ the number of representations of $k$ as a sum of $n^2+Nm^2$ with $n, m \geq 0$. Note that when
$k$ is not a perfect square or $N$ times a perfect square, then any representation of $k$ of the form $n^2+Nm^2$ must satisfy $nm \neq 0$. From this, we deduce that
when $k$ is not a perfect square or $N$ times a perfect square, then $4r_N(k)$ gives the number of lattice points $(u,v)$ satisfying $u^2+Nv^2=k$. From the well-known
relation (note that in our case the only units in $\mq(\sqrt{-N})$ are $\pm 1$)
\begin{align}
\label{latcount}
   2\sum_{d|n}\chi_N(d)=\sum_{\substack{i^2+Nj^2 = n \\ i,j \in \mz}}1,
\end{align}
   we deduce that when $k$ is not a perfect square or $N$ times a perfect square, then
\begin{align*}
   r_N(n)=\frac 12\sum_{d|n}\chi_N(d).
\end{align*}
   Thus we have
\begin{align*}
  Q_N(x)& =\sum_{k \leq Nx/\sqrt{1+N}}\ \sum_{n \leq \sqrt{1+N}x}r_N(nk) 
 =\frac 12\sum_{k \leq Nx/\sqrt{1+N}} \ \sum_{n \leq \sqrt{1+N}x} \ \sum_{d|nk}\chi_N(d)+O\left( x^{1+\varepsilon} \right) \\
&=\frac 12 \sum_{d \leq Nx^2}\chi_N(d)\sum_{k \leq Nx/\sqrt{1+N}} \ \sum_{\substack{n \leq \sqrt{1+N}x \\ nk \equiv 0 \bmod d}}1+O\left( x^{1+\varepsilon} \right).
\end{align*}
  We write $(n, d) = d_1$ with $d = d_1d_2$, $n = d_1l$ to see that
\begin{align*}
  Q_N(x) &=\frac 12\sum_{d_1 \leq \sqrt{1+N}x}\chi_N(d_1)\sum_{d_2 \leq Nx/\sqrt{1+N}}\chi_N(d_2)
\sum_{\substack{l \leq \sqrt{1+N}x/d_1 \\ (l,d_2)=1}}
\sum_{\substack{k \leq Nx/\sqrt{1+N} \\ d_2|k}}1+O(x^{1+\varepsilon}) \\
&=\frac 12\sum_{d_2 \leq Nx/\sqrt{1+N}}\chi_N(d_2)\left[ \frac {Nx}{d_2\sqrt{1+N}}\right] \sum_{\substack{l \leq \sqrt{1+N}x \\ (l,d_2)=1}} \
\sum_{\substack{d_1 \leq \sqrt{1+N}x/l}}\chi_N(d_1)
+O(x^{1+\varepsilon}).
\end{align*}

  Using the M\"obius function to eliminate the restriction $(l, d_2) = 1$  and writing $d_2 = ms, l = mt$, we obtain
\begin{equation} \label{Q}
\begin{split}
 Q_N& (x)  \\
 =  & \frac 12  \sum_{m \leq Nx/\sqrt{1+N}}\mu(m)\chi_N(m)\sum_{s \leq Nx/m\sqrt{1+N}}\chi_N(s)\left[ \frac {Nx}{ms\sqrt{1+N}} \right]\sum_{\substack{t \leq \sqrt{1+N}x/m }} \
\sum_{\substack{d_1 \leq \sqrt{1+N}x/mt}}\chi_N(d_1)
+O(x^{1+\varepsilon}).
\end{split}
\end{equation}

   We note that, as a special case of counting integral lattice points inside an ellipse (see \cite{Huxley1}), we have for for $x>0$,
\begin{align*}
   \sum_{\substack{i^2+Nj^2 \leq x\\ (i,j) \in \mz}}1 =\frac {\pi x}{\sqrt{N}}+O \left( x^{\alpha} \right),
\end{align*}
   for some constant $\alpha<1/3$. \newline

  We apply this and \eqref{latcount} to see that the inner double sum in \eqref{Q} is
\begin{equation}
\label{InnerQ}
\begin{split}
   \sum_{\substack{t \leq \sqrt{1+N}x/m }} \
\sum_{\substack{d_1 \leq \sqrt{1+N}x/mt}}\chi_N(d_1)&=\sum_{\substack{n \leq \sqrt{1+N}x/m }}
\sum_{\substack{d|n}}\chi_N(d) \\
&=\frac 12\sum_{\substack{i^2+Nj^2 \leq \sqrt{1+N}x/m \\ i,j \in \mz}}1 =\frac {\pi \sqrt{1+N}x}{2m\sqrt{N}}+O\left( \left( \frac {x}{m} \right)^{\alpha+\varepsilon} \right).
\end{split}
\end{equation}

   The same argument shows that
\begin{equation} \label{outerQ}
\begin{split}
  \sum_{s \leq Nx/m\sqrt{1+N}}\chi_N(s)\left[\frac {Nx}{ms\sqrt{1+N}}\right]&=\sum_{s \leq Nx/m\sqrt{1+N}}\chi_N(s)\sum_{r \leq Nx/ms\sqrt{1+N}}1 \\
&=\frac 12\sum_{\substack{i^2+Nj^2 \leq Nx/m\sqrt{1+N} \\ i,j \in \mz}}1
=\frac {\pi \sqrt{N} x}{2\sqrt{1+N}m}+O\left( \left( \frac {x}{m} \right)^{\alpha+\varepsilon} \right).
\end{split}
\end{equation}

   We then conclude from \eqref{Q}, \eqref{InnerQ} and \eqref{outerQ} that
\begin{align}
\label{Qx}
   Q_N(x)=\frac {\pi^2x^2}{8}\sum_{m \leq Nx/\sqrt{1+N}}\frac {\mu(m)\chi_N(m)}{m^2}+O \left( x^{1+\alpha+\varepsilon} \right)
=\frac {\pi^2x^2}{8L(2, \chi_N)}+O \left( x^{1+\alpha+\varepsilon} \right).
\end{align}

\subsection{Treatment of $T_N(x)$}
   First we have
\begin{equation} \label{T}
T_N(x) = T_{N,1}(x)-T_{N,2}(x),
\end{equation}
where
\[ T_{N,1} = \sum_{Nx/\sqrt{1+N} < k \leq \sqrt{1+N}x}\sum_{\substack{m \leq x, n \leq x\\ n^2+Nm^2 \equiv 0 \pmod k }}1 \]
and
\[ T_{N,2} =
\sum_{Nx/\sqrt{1+N} < k \leq \sqrt{1+N}x}\sum_{\substack{\sqrt{(kx\sqrt{1+N}-x^2)/N} < m \leq x}}\sum_{\substack{\sqrt{kx\sqrt{1+N}-Nm^2} < n \leq x}}1 . \]

    The treatment of  $T_{N,1}(x)$ is similar to that of $R_N(x)$, so we omit the details here and we obtain
\begin{align}
\label{T1}
  T_{N,1}(x)=x^2\sum_{\substack{Nx/\sqrt{1+N}< ab^2d \leq \sqrt{1+N}x \\ (a,d)=1}}\frac {\mu^2(a)\rho_{0,N}(d)\varphi(d)}{(abd)^2}+O(x^{3/2+\varepsilon}).
\end{align}

    To deal with $T_{N,2}(x)$, we write $(m^2, k) = ab^2$ with $a$ square-free. Then replacing $m, n, k$ by $abm, abn, ab^2d$ respectively and
noting that the condition $(m^2, k) = ab^2$ becomes
$(am, d) = 1$, we can recast $T_{N,2}(x)$ as
\begin{equation}
\label{T2}
T_{N,2}(x) = \sum_{\substack{Nx/\sqrt{1+N} < ab^2d \leq \sqrt{1+N}x \\ (a,d)=1}}\mu^2(a)\sum_{\substack{A < m \leq x/ab \\ (m,d)=1}} \sum_{\substack{B(m) < n \leq x/ab \\  n^2+Nm^2 \equiv 0 \bmod d}} 1 
\end{equation}
where, for notation convenience, we set
\[  A = \frac{\sqrt{(ab^2dx\sqrt{1+N}-x^2)}}{ab\sqrt{N}} \quad \mbox{and} \quad B(m) = \sqrt{\frac{dx}{a}\sqrt{1+N}-Nm^2} . \]
Now we get
\begin{align}
T_{N,2} (x)=&  \sum_{\substack{Nx/\sqrt{1+N} < ab^2d \leq \sqrt{1+N}x \\ (a,d)=1}}\mu^2(a)\sum_{\substack{A < m \leq x/ab \\ (m,d)=1}}
\sum_{\substack{ v \bmod d \\ v^2+Nm^2 \equiv 0 \bmod d}} \sum_{\substack{B(m) < n \leq x/ab \\ n \equiv v \bmod d}}1 \nonumber \\
=& \sum_{\substack{Nx/\sqrt{1+N} < ab^2d \leq \sqrt{1+N}x \\ (a,d)=1}}\mu^2(a)\sum_{\substack{A < m \leq x/ab \\ (m,d)=1}}
\sum_{\substack{ v \bmod d \\ v^2+Nm^2 \equiv 0 \bmod d}}  \left ( \left[ \frac {x/ab-v}{d} \right]-\left[ \frac {B(m)-v}{d} \right] \right )\nonumber \\
= & \quad T_{N,21}(x)+T_{N,22}(x)-T_{N,23}(x). \nonumber
\end{align}
where
\[ T_{N,21}(x) = \sum_{\substack{Nx/\sqrt{1+N} < ab^2d \leq \sqrt{1+N}x \\ (a,d)=1}}\frac {\mu^2(a)}{d} \sum_{\substack{A < m \leq x/ab \\ (m,d)=1}}
\sum_{\substack{ v \bmod d \\ v^2+Nm^2 \equiv 0 \bmod d}}\left ( \frac {x}{ab}-B(m) \right )  ,
\]
\[T_{N,22}(x)=  \sum_{\substack{Nx/\sqrt{1+N} < ab^2d \leq \sqrt{1+N}x \\ (a,d)=1}}\mu^2(a)
\sum_{\substack{A < m \leq x/ab \\ (m,d)=1}} \sum_{\substack{ v \bmod d \\ v^2+Nm^2 \equiv 0 \bmod d}}\psi \left ( \frac {B(m) -v}{d} \right )  \]
and
\[ T_{N,23}= \sum_{\substack{Nx/\sqrt{1+N} < ab^2d \leq \sqrt{1+N}x \\ (a,d)=1}}\mu^2(a) \sum_{\substack{A < m \leq x/ab \\ (m,d)=1}}
\sum_{\substack{ v \bmod d \\ v^2+Nm^2 \equiv 0 \bmod d}}\psi \left( \frac {x/ab-v}{d} \right) . \]
We shall show that $T_{N, 21}(x)$ gives the main term of $T_{N,2}(x)$ and the others are remainders. We first estimate $T_{N,22}(x)$. Note the main contribution for $T_{N,22}(x)$ comes from the terms when $ab^2$ is small: we suppose $ab^2 \leq x^{1/2-\varepsilon}$ and the discarded terms contribute $O(x^{3/2+\varepsilon})$. This is because the sum over $v$ is $\ll 2^{\omega(d)}$ and the sum over $m$ is $\ll xb/ab^2 \ll x^{1/2+\varepsilon}b$, which implies that if we sum over $d$, then the total sum is $\ll x^{3/2+\varepsilon}/ab$. Now summing trivially over $a,b \ll x$ gives the desired error bound.  Note that we may also shorten the range for $d$ to $N(x +\sqrt{x})/\sqrt{1+N} < ab^2d \leq \sqrt{1+N}x$ as similar discussions as above imply that the range of $Nx/\sqrt{1+N} < ab^2d \leq N(x +\sqrt{x})/\sqrt{1+N}$ gives us an error $O(x^{3/2+\varepsilon})$.  We now write

\begin{align}
\label{T22decomp}
    T_{N,22}(x)=\sum_{a,b}T_{N,22}(a,b,x).
\end{align}

   Similar to the detailed discussion concerning the Fourier approximation of $E_{N,R}(x)$ given in
Section \ref{secR}, for some parameter $H_1 = H_1(a, b)$ which satisfies $x^{\varepsilon} \ll H_1 \ll x^{1/2-\varepsilon}$ and will be
fixed later,
\begin{equation}
\label{T22}
\begin{split}
   T_{N,22}&(a,b,x) \\
    \ll & \left| \sum_d\sum_m\sum_v\sum_{1 \leq |h_1| \leq H_1}\tilde{g}(h_1)e\left(\frac {h_1\left (B(m)-v \right )}{d}\right ) \right|
+\sum_d\sum_m\sum_v\frac {1}{H_1} = \left|  T'_{N,22}(a,b,H_1, x) \right|+O \left(\frac {x^{2+\varepsilon}}{a^2b^3H_1} \right),
\end{split}
\end{equation}
where $\tilde{g}(h_1)$ is certain complex number satisfying $|\tilde{g}(h_1)| \ll 1/|h_1|$.  Using $\sum_{u \bmod {d}^*}$ to denote sum over reduced residue classes modulo $d$, we get
\begin{align*}
   T'_{N,22}(a,b,H_1, x) =& \sum_d\sum_{1 \leq |h_1| \leq H_1}\tilde{g}(h_1)\sum_{\substack{u \bmod {d}^*  \\ v \bmod {d} \\ d | v^2+Nu^2}}e \left( \frac {-h_1v}{d} \right) \sum_{m \equiv u \bmod d}e\left (\frac {h_1B(m)}{d}\right )  \\
=& \sum_d\frac 1{d} \sum_{-d/2<h_2\leq d/2} \ \sum_{1 \leq  |h_1| \leq H_1}\tilde{g}(h_1)\sum_{\substack{u \bmod {d}^*  \\ v \bmod {d}\\ d | v^2+Nu^2}}e \left( \frac {h_2u-h_1v}{d} \right) \sum_{m } e\left (\frac {h_1B(m)-h_2m}{d}\right ) \\
=& \sum_d\frac 1{d} \sum_{-d/2<h_2\leq d/2} \ \sum_{1 \leq  |h_1| \leq H_1}\tilde{g}(h_1)\sum_{\substack{v \bmod {d} \\ v^2+N \equiv 0 \bmod d}}R(h_2-h_1v;d)
\sum_{m }e \left( \frac {h_1B(m)-h_2m}{d} \right),
\end{align*}
  where $R(w; d)$ is the Ramanujan sum
\begin{align}
\label{Rsum}
   R(w; d) =: \sum_{\substack{0<u<d\\ (u,d)=1}}e \left( \frac {wu}{d} \right)=\sum_{s|(w,d)} s\mu \left( \frac ds \right).
\end{align}
  We use \eqref{Rsum} for $R(h_2 -h_1v;d)$ to estimate the
contribution from those $h_2$'s satisfying $|h_2| \leq H_1x^{\varepsilon}$ and it is
\[ \ll \frac x{ab}\sum_d\frac 1{d} \sum_{|h_2| \leq H_1x^{\varepsilon}}\sum_{1 \leq  |h_1| \leq H_1}\frac {1}{|h_1|}
\sum_{\substack{v \bmod {d} \\ v^2+N \equiv 0 \bmod d}}\sum_{s|(d, h_2-h_1v)}s. \]

  Since $v^2 + N \equiv 0 \pmod d$, the condition $s|(h_2 -h_1v)$ implies that $s | (Nh_1^2 + h_2^2)$, thus the above is
\begin{align*}
   \ll & \frac {x^{1+\varepsilon}}{ab}\sum_d\frac 1{d} \sum_{|h_2| \leq H_1x^{\varepsilon}}\sum_{1 \leq  |h_1| \leq H_1}\frac {1}{|h_1|}
\sum_{s|(d, Nh^2_1+h^2_2)}s \ll  \frac {x^{1+\varepsilon}}{ab}\sum_{1 \leq  |h_1| \leq H_1}\frac {1}{|h_1|} \sum_{|h_2| \leq H_1x^{\varepsilon}}
\sum_{\substack{s| Nh^2_1+h^2_2 \\ s \leq \frac {\sqrt{1+N}x}{ab^2}}}\sum_{l \leq \frac {\sqrt{1+N}x}{ab^2s}}\frac 1{l}  \ll \frac {x^{1+\varepsilon}H_1}{ab}.
\end{align*}

   Hence we have
\begin{equation} \label{T'}
\begin{split}
   &T'_{N,22}(a,b, H_1, x) \\
   & \ll x^{\varepsilon}\sum_d\frac 1{d} \max_{\substack{H'_1 \ll H_1 \\ H_1x^{\varepsilon} \ll H_2 \ll d/4}} \frac 1{H'_1}
\sum_{h_1 \sim H'_1}\sum_{H_2<|h_2| \leq 2H_2}\sum_{\substack{v \bmod {d} \\ v^2+N \equiv 0 \bmod d}} \left| R(h_2-h_1v;d) \sum_{m }e \left( \frac {h_1B(m)-h_2m}{d} \right)\right |+\frac {x^{1+\varepsilon}H_1}{ab}. 
\end{split}
\end{equation}
For fixed $H'_1$, $H_2$ and $d$, we can divide the range of $m$ into two parts, say $\Omega_1$ and $\Omega_2$,
such that
\begin{align} \label{omega1}
   \left\| \frac {h_1m/B(m)+h_2}{d} \right\|< \frac {H_1}{d}, \quad \text{if} \; x \in \Omega_1.
\end{align}
   and otherwise if $m \in \Omega_2$. It is clear that $\Omega_1$  consists of at most
$O\left( 1 + H'_1x^{1/4}/d \right)$ continuous segments, by noting that when $d > N(x +\sqrt{x})/(\sqrt{1+N}ab^2)$ we
have $\sqrt{d\sqrt{1+N}/a-Nm^2}> x^{3/4}/ab$.
Now the Kusmin-Landau inequality (see \cite[Lemma 4]{Yu}) asserts that for a differentiable function $f$ with $\| f' \| \geq \lambda>0$ on an interval $I$, then
\begin{align*}
   \sum_{n \in I}e(f(n)) \ll \lambda^{-1}.
\end{align*}
   We then deduce from \eqref{omega1} that for the subsum over each segment contained in $\Omega_2$,  we have a contribution of $O(d/H_1)$, and thus the summation over $\Omega_2$ is bounded by
$O(x^{1/4} + d/H_1)$. \newline

    Since $H_1x^{\varepsilon} \ll 2|h_2| \leq d$ and $h_1$ is positive, we have
\begin{align*}
   \left| \Omega_1 \right| \ll \bigcup_{0 \leq j \ll H'_1x^{1/4}} \left| \Omega_{1j} \right|
\end{align*}
   where $\Omega_{1j}$ are pairwise disjoint segments such that $m \in \Omega_{1j}$ if and only if
\begin{align*}
   \left| \frac {h_1m/B(m)+h_2}{d}-j \right| < \frac {H_1}{d}, \quad \text{if $x \in \Omega_1$}.
\end{align*}

    It can be seen that the above is equivalent to
\begin{align*}
    \frac {dx\sqrt{1+N}}{aN} \left( 1+\frac {h^2_1}{N(jd-h_2-H_1)^2} \right)^{-1}< m^2 < \frac {dx\sqrt{1+N}}{aN} \left( 1+\frac {h^2_1}{N(jd-h_2+H_1)^2} \right)^{-1}.
\end{align*}
    As $d \geq 2|h_2| \gg H_1x^{\varepsilon}$, the above implies that the length of $\Omega_{1j}$ is
\begin{align*}
    \ll h^2_1\sqrt{\frac{dx}{a}} \left( \frac {1}{(jd-h_2-H_1)^2}-\frac {1}{(jd-h_2+H_1)^2} \right)    \ll  \sqrt{\frac{dx}{a}}\frac {H_1h^2_1}{|jd-h_2|^3}.
\end{align*}
     We note that when $j \geq 1$, $|jd-h_2| \gg jd$ so that the total length of $\Omega_1$ is
\begin{align*}
    \ll \Omega_{10}+\sum_{j \geq 1}\sqrt{\frac{dx}{a}}\frac {H_1h^2_1}{(jd)^3} \ll \sqrt{\frac{dx}{a}}\frac {H_1{H'}^2_1}{H^3_2} .
\end{align*}
  Combining this with the estimate over $\Omega_{2}$, we get
\begin{align}
\label{summ}
    \left|\sum_{m }e \left( \frac {h_1B(m) -h_2m}{d} \right) \right| \ll \sqrt{\frac{dx}{a}}\frac {H_1{H'}^2_1}{H^3_2}+x^{1/4} + \frac d{H_1}.
\end{align}

    From \eqref{Rsum}, \eqref{T'} and \eqref{summ} we deduce that
\begin{align*}
 & T'_{N,22}(a,b, H_1, x) \\
\ll & x^{\varepsilon}\sum_d\frac 1{d} \max_{\substack{H'_1 \ll H_1 \\ H_1x^{\varepsilon} \ll H_2 \ll d/4}}
\left ( \sqrt{\frac{dx}{a}}\frac {H_1H'_1}{H^3_2}+\frac {x^{1/4}}{H'_1} + \frac d{H_1H'_1}\right )
\sum_{h_1 \sim H'_1}\sum_{H_2<|h_2| \leq 2H_2}\sum_{\substack{v \bmod {d} \\ v^2+N \equiv 0 \bmod d}}\sum_{s|(v, h_2-h_1v)}s+\frac {x^{1+\varepsilon}H_1}{ab} \\
\ll & x^{\varepsilon} \max_{\substack{H'_1 \ll H_1 \\ H_1x^{\varepsilon} \ll H_2 \ll x/ab^2}}\sum_d\frac 1{d}
\left ( \frac {xH_1H'_1}{abH^3_2}+\frac {x^{1/4}}{H'_1} + \frac x{ab^2H_1H'_1}\right )
\sum_{h_1 \sim H'_1}\sum_{H_2<|h_2| \leq 2H_2} \ \sum_{s|(d, Nh^2_1+h^2_2)}s+\frac {x^{1+\varepsilon}H_1}{ab}
\end{align*}
Now after a short computation, we get
\[ T'_{N,22}(a,b, H_1, x) \ll  \frac {x^{1+\varepsilon}H_1}{ab}+\frac {x^{5/4+\varepsilon}}{ab^2}+\frac {x^{2+\varepsilon}}{(ab^2)^2H_1}+\frac {x^{1+\varepsilon}H_1}{ab}. \]

   Let $H_1 = x^{1/2}/b$, then we have from \eqref{T22} and the above estimation that
\begin{align*}
  T_{N,22}(a,b, H_1, x) \ll \frac {x^{3/2+\varepsilon}}{ab^2}.
\end{align*}
It then follows from \eqref{T22decomp} that
\begin{align} \label{T22bound}
  T_{N,22}(x) \ll x^{3/2+\varepsilon}.
\end{align}

For $T_{N,23}(x)$, we need to estimate a sum of form
\begin{align*}
  \sum_{a,b,d}\sum_{\substack{v \bmod {d} \\ v^2+N \equiv 0 \bmod d}}\sum_{h}a(h)e\left ( \frac {hx/ab-hmv}{d} \right ).
\end{align*}

    This is of the same shape as the sum involved in $E_{R,N}(x)$. By using arguments similar to what we have
done for $E_{R,N}(x)$, one can prove that
\begin{align} \label{T23bound}
  T_{N,23}(x) \ll x^{3/2+\varepsilon}.
\end{align}

    We now evaluate $T_{N,21}(x)$, which will give the main term of $T_{N,2}(x)$. We write for brevity,
\begin{align*}
  f_N(a,b,d,m)=\frac {x}{ab}-\sqrt{dx\sqrt{1+N}/a-Nm^2}.
\end{align*}

    Then we have
\begin{equation}
\begin{split}
\label{T21}
T_{N,21}(x) =&\sum_{\substack{Nx/\sqrt{1+N} < ab^2d \leq \sqrt{1+N}x \\ (a,d)=1}}\frac {\mu^2(a)}{d} \sum_{\substack{u \bmod {d}^* \\ v \bmod {d} \\ v^2+Nu^2 \equiv 0 \bmod d}} \sum_{\substack{A < m \leq x/ab \\ m \equiv u \bmod d}} f_N(a,b,d,m) \\
=&\sum_{\substack{Nx/\sqrt{1+N} < ab^2d \leq \sqrt{1+N}x \\ (a,d)=1}} \frac {\mu^2(a)}{d^2} \sum_{\substack{u \bmod {d}^* \\ v \bmod {d} \\ v^2+Nu^2 \equiv 0 \bmod d}} \sum_{-d/2<h \leq d/2} e \left( -\frac {hu}{d} \right)  \sum_{A < m \leq x/ab } f_N(a,b,d,m)e \left( \frac {hm}{d} \right) \\
= &\quad  T^{(1)}_{N,21}(x)+T^{(2)}_{N,21}(x),
\end{split}
\end{equation}
where
\[ T^{(1)}_{N,21}(x) = \sum_{\substack{Nx/\sqrt{1+N} < ab^2d \leq \sqrt{1+N}x \\ (a,d)=1}}\frac {\mu^2(a)\rho_{0,N}(d)\varphi(d)}{d^2}
\sum_{\substack{A < m \leq x/ab }}f_N(a,b,d,m) \]
and
\[ T^{(2)}_{N,21}(x) = \sum_{\substack{Nx/\sqrt{1+N} < ab^2d \leq \sqrt{1+N}x \\ (a,d)=1}}\frac {\mu^2(a)\rho_{0,N}(d)}{d^2}
\sum_{h \neq 0} R(-h;d) \sum_{A < m \leq x/ab } f_N(a,b,d,m)e \left( \frac {hm}{d} \right) . \]

Note that for $0 < |h| \leq  d/2$,
\begin{align*}
  \sum_{m \leq t}e \left( \frac {hm}{d} \right) \ll \frac {d}{|h|}.
\end{align*}
   We then deduce via partial summation that
\begin{align*}
  \sum_{A < m \leq x/ab }f_N(a,b,d,m)e \left( \frac {hm}{d} \right) \ll \frac {dx}{ab|h|}.
\end{align*}

   Hence with \eqref{Rsum} we have
\begin{equation} \label{T212}
\begin{split}
  T^{(2)}_{N,21}(x) \ll & x^{1+\varepsilon}\sum_{\substack{Nx/\sqrt{1+N} < ab^2d \leq \sqrt{1+N}x \\ (a,d)=1}}\frac {1}{abd}
\sum_{1 \leq h \leq d/2}\frac 1h \sum_{s|(h,d)}s \\
\ll & x^{1+\varepsilon}\sum_{\substack{N x/\sqrt{1+N} < ab^2d \leq \sqrt{1+N}x \\ (a,d)=1}}\frac {1}{ab}\sum_{s \leq \sqrt{1+N}x/ab^2}\frac 1s
\sum_{l \sim x/ab^2s}\frac 1l
\sum_{h' \leq l/2}\frac 1{h'} \ll  x^{1+\varepsilon}.
\end{split}
\end{equation}

   Therefore, from \eqref{T2}, \eqref{T22bound}, \eqref{T23bound}, \eqref{T21} and \eqref{T212}, we have, recalling the definition of $\rho_{0,N}$ in \eqref{rho0def},
\begin{align*}
  T_{N,2}(x)
&=\sum_{\substack{Nx/\sqrt{1+N} < ab^2d \leq \sqrt{1+N}x \\ (a,d)=1}}\frac {\mu^2(a)\rho_{0,N}(d)\varphi(d)}{d^2}\sum_{A < m \leq x/ab } f_N(a,b,d,m)
+ O \left( x^{3/2+\varepsilon} \right)  \\
&=\sum_{\substack{Nx/\sqrt{1+N} < ab^2d \leq \sqrt{1+N}x \\ (a,d)=1}}\frac {\mu^2(a)\rho_{0,N}(d)\varphi(d)}{d^2}
\# \left\{ m \leq \frac x{ab}, \ n \leq \frac x{ab}: n^2+Nm^2 > \frac {dx\sqrt{1+N}}{a} \right\} +O \left( x^{3/2+\varepsilon} \right).
\end{align*}

    Combining this with \eqref{T} and \eqref{T1}, we derive that
\begin{equation}
\label{Tx0}
\begin{split}
  T_N(x) = \sum_{\substack{Nx/\sqrt{1+N} < ab^2d \leq \sqrt{1+N}x \\ (a,d)=1}}\frac {\mu^2(a)\rho_{0,N}(d)\varphi(d)}{d^2}
\#  & \left\{ m \leq \frac x{ab}, n \leq \frac x{ab}: n^2+Nm^2 \leq  \frac {dx\sqrt{1+N}}{a} \right\}
\\ &+O \left(x^{3/2+\varepsilon} \right).
\end{split}
\end{equation}

  Let $C$ be a simple convex closed curve, it is well-known (see \cite[Lemma 2.1.1]{Huxley2}) that
\begin{align}
\label{lat}
  \# \{\text{Integral lattice points inside $C$} \}=
\text{Area enclosed by $C$}+O(\text{Length of the boundary of $C$}) .
\end{align}
Consequently,
\begin{equation*}
\begin{split}
  \# \left\{ m \leq \frac x{ab}, n \leq \frac x{ab}: n^2+Nm^2 \leq  \frac {dx\sqrt{1+N}}{a} \right\} - \text{Area of}  & \left\{ (u, v) : 0 \leq u \leq \frac x{ab}, 0 \leq v \leq \frac x{ab},  u^2+Nv^2 \leq \frac {dx\sqrt{1+N}}{a} \right\} \\
&  \ll \sqrt{\frac {dx}{a}}\ll \frac {x}{ab},
\end{split}
 \end{equation*}
as $d \ll x/ab^2$. \newline

   Note that we also have
\begin{align*}
 \text{Area of} &  \left\{ (u, v) : 0 \leq u \leq \frac x{ab}, 0 \leq v \leq \frac x{ab},  u^2+Nv^2 \leq \frac {dx\sqrt{1+N}}{a} \right\}  \\
=& \frac 1{(ab)^2}\text{Area of}  \left\{ (u, v) : 0 \leq u \leq x, 0 \leq v \leq x,  u^2+Nv^2 \leq ab^2dx\sqrt{1+N} \right\}  \\
=& \frac 1{(ab)^2}\left ( \# \left\{ m \leq  x, n \leq x: n^2+Nm^2 \leq  ab^2dx\sqrt{1+N} \right\}+O(x) \right ),
\end{align*}
again noting that $d \ll x/ab^2$. \newline

   The above estimates allow us to derive from \eqref{Tx0} that
\begin{equation} \label{Tx}
\begin{split}
  T_N(x)  =& \sum_{\substack{Nx/\sqrt{1+N} < ab^2d \leq \sqrt{1+N}x \\ (a,d)=1}}\frac {\mu^2(a)\rho_{0,N}(d)\varphi(d)}{(abd)^2}
\# \{ m \leq x, n \leq x: n^2+Nm^2 \leq  ab^2dx\sqrt{1+N} \}  \\
& \hspace*{3cm} +O\left (x\sum_{\substack{Nx/\sqrt{1+N} < ab^2d \leq \sqrt{1+N}x \\ (a,d)=1}}\frac {\mu^2(a)\rho_{0,N}(d)\varphi(d)}{abd^2} + x^{3/2+\varepsilon} \right) \\
=& \sum_{\substack{Nx/\sqrt{1+N} < k \leq \sqrt{1+N}x }}\frac {\rho_N(k)}{k^2}
\# \left\{ m \leq x, n \leq x: n^2+Nm^2 \leq  kx\sqrt{1+N} \right\}+O(x^{3/2+\varepsilon}).
\end{split}
\end{equation}

   By counting the number of lattice points using \eqref{lat}, we get
\begin{equation} \label{lattice}
\begin{split}
\# \{ m \leq x, n \leq x: n^2+Nm^2 \leq  kx\sqrt{1+N} \}  
= \frac {kx\sqrt{1+N}}{2\sqrt{N}} &\left (\arccos \left(\sqrt{1-\frac {x}{k\sqrt{1+N}}}\right)-
\arccos \left(\sqrt{\frac {Nx}{k\sqrt{1+N}}}\right)\right )  \\
&+\frac 1{2} x \sqrt{\frac {kx\sqrt{1+N}-x^2}{N}}+\frac 1{2} x \sqrt{kx\sqrt{1+N}-Nx^2}+O(x). 
\end{split}
\end{equation}

Thus, partial summation gives that
\begin{equation}
\label{ellipticarea}
\begin{split}
&\sum_{\substack{Nx/\sqrt{1+N} < k \leq \sqrt{1+N}x }} \frac {\rho_N(k)}{k^2}\cdot \frac {kx\sqrt{1+N}}{2\sqrt{N}}\left (\arccos \left(\sqrt{1-\frac {x}{k\sqrt{1+N}}}\right)-
\arccos \left(\sqrt{\frac {Nx}{k\sqrt{1+N}}}\right)\right ) \\
= & \frac {x\sqrt{1+N}}{2\sqrt{N}}\int\limits^{\sqrt{1+N}x}_{Nx/\sqrt{1+N}}\left (\arccos \left(\sqrt{1-\frac {x}{t\sqrt{1+N}}}\right)-
\arccos \left(\sqrt{\frac {Nx}{t\sqrt{1+N}}}\right)\right ) \dif \left ( \sum_{\substack{k \leq t }}\frac {\rho_N(k)}{k}\right )  \\
= & -\frac {x\sqrt{1+N}}{2\sqrt{N}}\left ( \sum_{\substack{k \leq Nx/\sqrt{1+N} }}\frac {\rho_N(k)}{k}\right )\arccos \left(\sqrt{1-\frac {1}{N}}\right) \\
& \hspace*{1cm} -\frac {x\sqrt{1+N}}{2\sqrt{N}}\int\limits^{\sqrt{1+N}x}_{Nx/\sqrt{1+N}}\left ( \sum_{\substack{k \leq t }}\frac {\rho_N(k)}{k}\right ) \dif \left (\arccos \left(\sqrt{1-\frac {x}{t\sqrt{1+N}}}\right)- \arccos \left(\sqrt{\frac {Nx}{t\sqrt{1+N}}}\right)\right ) 
\end{split}
\end{equation}
One easily checks that the last integral above equals to
\[ \int\limits^{\sqrt{1+N}x}_{Nx/\sqrt{1+N}}\left ( \sum_{\substack{k \leq t }}\frac {\rho_N(k)}{k}\right )
 \cdot \left (  -\frac {1}{2t\sqrt{\frac {t\sqrt{1+N}}{x}-1}} - \frac {1}{2t\sqrt{ \frac {t\sqrt{1+N}}{Nx}-1}}  \right ) \dif t . \]
Now, using Lemma \ref{rhod}, we see that the expression in \eqref{ellipticarea} is
\begin{equation} \label{ellipticarea2}
 -A_N \sqrt{N} x^2\arccos \left(\sqrt{1-\frac {1}{N}}\right) +A_Nx^2 \left( 2-\sqrt{1-\frac 1N} \right) +O \left( x^{4/3+\varepsilon} \right).
 \end{equation}
 
A similar computation yields
\begin{equation} \label{rectanglearea}
\begin{split}
 \frac x{2} &\sum_{\substack{Nx/\sqrt{1+N} < k \leq \sqrt{1+N}x }}\frac {\rho_N(k)}{k^2}\cdot \left (  \sqrt{\frac {kx\sqrt{1+N}-x^2}{N}}+\sqrt{kx\sqrt{1+N}-Nx^2} \right ) \\
=& 2A_Nx^2 \left( 2-\sqrt{1-\frac {1}{N}}-N^{1/2}\arctan N^{-1/2}-N^{-1/2}\arctan N^{1/2}+N^{-1/2}\arctan (N-1)^{1/2} \right)+O\left( x^{4/3+\varepsilon} \right).
\end{split}
\end{equation}

Now from \eqref{Tx}-\eqref{rectanglearea}, we conclude that
\begin{equation} \label{Txfinal}
\begin{split}
 T&_N(x) \\
=& \frac {A_Nx^2}{\sqrt{N}}\left (6\sqrt{N}-3\sqrt{N-1}- 2N\arccos \left(\sqrt{1-\frac {1}{N}}\right)-2N\arctan N^{-1/2}-2\arctan N^{1/2}+
2\arctan (N-1)^{1/2} \right ) \\
& \hspace*{2in} +O(x^{4/3+\varepsilon}). 
\end{split}
\end{equation}

\subsection{Completion of Proof of Theorem \ref{mainthm}}

   From \eqref{Sx}, \eqref{RNestm}, \eqref{Qx} and \eqref{Txfinal}, we conclude that
\[ S_N(x)=A_1(N)x^2\log x+A_2(N)x+O \left( x^{3/2+\varepsilon} \right), \]
   where
\begin{equation} \label{A1N}
  A_1(N)= 4A_N,
  \end{equation}
 and
 \begin{equation} \label{A2N}
 \begin{split}
A_2(N) =  4 \int\limits^{\infty}_1\frac {E_N(t)}{t^3} \dif t &-\frac {\pi^2}{8L(2, \chi_N)} +
\frac {A_N }{\sqrt{N}} \left (2\sqrt{N}(\log (N+1)+1)+6\sqrt{N}-3\sqrt{N-1} \right )  \\
& - \frac {2A_N }{\sqrt{N}} \left (N\arccos \left(\sqrt{1-\frac {1}{N}}\right)+N\arctan N^{-1/2}+\arctan N^{1/2}-
\arctan (N-1)^{1/2}  \right ).
\end{split}
\end{equation}
   Here $E_N, A_N$ are given in \eqref{EN} and \eqref{A}, respectively. The assertion of Theorem \ref{mainthm} now follows.

\vspace{0.1in}

\noindent{\bf Acknowledgments.} P. G. is supported in part by NSFC grant 11871082 and L. Z. by the FRG grant PS43707 and the Faculty Silverstar Award PS49334.
Parts of this work were done when P. G. visited the University of New South Wales (UNSW) in August 2018. He wishes to thank UNSW for the invitation, financial support and warm hospitality during his pleasant stay.

\bibliography{biblio}
\bibliographystyle{amsxport}

\vspace*{.5cm}

\noindent\begin{tabular}{p{8cm}p{8cm}}
School of Mathematics and Systems Science & School of Mathematics and Statistics \\
Beihang University & University of New South Wales \\
Beijing 100191 China & Sydney NSW 2052 Australia \\
Email: {\tt penggao@buaa.edu.cn} & Email: {\tt l.zhao@unsw.edu.au} \\
\end{tabular}

\end{document}